\newtheorem{theorem}{Theorem}[section]
\newtheorem{lemma}[theorem]{Lemma}
\newtheorem{corollary}[theorem]{Corollary}
\theoremstyle{definition}
\newtheorem{remark}[theorem]{Remark}
\newtheorem{example}[theorem]{Example}
\newtheorem{question}[theorem]{Question}
\numberwithin{equation}{section}
\begin{document}

\title{Commutators and images of noncommutative polynomials}

\author{Matej Bre\v sar}
\address{ Faculty of Mathematics and Physics,  University of Ljubljana,  and Faculty of Natural Sciences and Mathematics, University
of Maribor, Slovenia}
\email{matej.bresar@fmf.uni-lj.si}

\thanks{\emph{Mathematics Subject Classification}. 16R10,  16R60, 16S50, 46L05.}
\keywords{Commutator, noncommutative polynomial, polynomial identity, locally linearly dependent polynomials, square-zero element, matrix algebra, algebra of bounded operators on a Hilbert space,  Lie ideal, Waring's problem.}

\thanks{The author was supported by ARRS Grant P1-0288.}
\begin{abstract}
Let $A$ be an algebra and let $f$ be a nonconstant noncommutative polynomial. In the first part of the paper,
we consider the relationship between $[A,A]$, the linear span of commutators in $A$, and span$\,f(A)$,  the linear span of the image of $f$ in $A$.
In particular, we show that  $[A,A]=A$ implies  span\,$f(A)=A$. In the second part, we  establish some Waring type results for images of polynomials.
For example, we show that if $C$ is a 
commutative unital algebra over a field $F$ of characteristic $0$,
 $A$ is the matrix algebra $M_n(C)$, and the polynomial $f$ 
is neither an identity nor a central polynomial of $M_n(F)$, then  every commutator in $A$
can be written 
as a difference of two elements, each of which is a sum of $7788$ elements from $f(A)$ (if $C=F$ is an algebraically closed field, then
$4$ elements suffice).  Similar results 
are obtained for some other algebras, in particular for  the algebra $B(H)$ of all bounded linear operators on a Hilbert space $H$.
\end{abstract}

\maketitle

\section{Introduction}

Let $F$ be a field and let $F\langle \mathcal X\rangle$ be the free algebra generated by the set $\mathcal X = \{X_1,X_2,\dots\}$, i.e., the algebra of  noncommutative polynomials in the variables $X_i$. For any $F$-algebra $A$ and
$f=f(X_1,\dots,X_m)\in F\langle \mathcal X\rangle$, we set
$$f(A)=\{f(a_1,\dots,a_m)\,|\,a_1,\dots,a_m\in A\}$$
(we tacitly assume that $f$ has zero constant term if $A$ is not unital). We call $f(A)$ the {\em image of $f$}.
% Our main interest will be in span$\,f(A)$, the linear span of $f(A)$. In the special case where $f$ is the commutator $[X_1,X_2] =X_1X_2-X_2X_1$, we write, as usual, $[A,A]$ for span$\,f(A)$.

In Section \ref{s3},   we first consider the question of  when  span$\,f(A)$, the linear span of $f(A)$, equals $A$. The special case where $f$ is the commutator $[X_1,X_2]=X_1X_2-X_2X_1$ has been studied by numerous authors in different contexts (see, e.g., \cite{M, W}). We prove that the general case reduces to this special case (Theorem \ref{t}). 
This gives a substantial generalization of some results from \cite{BK2} and \cite{R} which establish the same conclusion for certain $C^*$-algebras.
We  also obtain  fairly conclusive results
 concerning the more general question of when span$\,f(A)$
contains $[A,A]$ (Theorems \ref{T2} and \ref{t3}).

Section \ref{s4} gives a  more detailed analysis of representing the elements of $A$ through the elements of $f(A)$. More specifically, we show that, under suitable assumptions, every element (or at least every commutator) in $A$ is a sum/difference or a linear combination of  a fixed number of elements from $f(A)$.
% Our results were partially motivated by
 One of our motivations  for such results was the theory of 
expressing the elements from $B(H)$, the algebra of all bounded linear operators on a  Hilbert space $H$, 
%as sums or linear combinations of 
by some special-type elements.
For example, if $H$ is infinite-dimensional, 
then every element of $B(H)$ is a sum of two commutators \cite{Hal}, a sum of five idempotents \cite{PT}, a linear combination of ten projections \cite{Mat}, a sum of five square-zero elements \cite{PT}, etc. (for a nice, although not updated, survey, see \cite{Wu}). These are just sample results, most of them  have  both predecessors (e.g., involving larger  numbers of these special-type elements) and successors (e.g., involving more general operator algebras). The approach we take is to consider elements from the image of a polynomial as the special-type ones. We remark that the idea for such an approach appears also in \cite{R}, but the results therein are of a somewhat different nature.

Our first main result of Section \ref{s4} (Theorem \ref{tt2}) states that if $F$ is a field satisfying small restrictions, 
the polynomial $f$ is   neither an identity nor a central polynomial of the algebra $M_n(F)$,
	and  $B$ is a unital $F$-algebra such that every element of the algebra $A=M_n(B)$ is a sum of $k$ commutators and a central element, then 
	  every commutator in $A$ can be written as a difference of two elements, each of which is a sum of $1936k^2 + 22k$ elements from $f(A)$. 
%We are primarily interested in concrete situations to which this theorem applies.
This applies to the case where
$A=M_n(C)$ with $C$ a commutative algebra (Corollary \ref{ts22}),   the case where  $A={\rm End}_F(V)$ (Corollary \ref{cend}), and also the case where $A=B(H)$ (Corollary \ref{cH}). In 
 the fundamental case where $A=M_n(F)$ with $F$ algebraically closed and of characteristic $0$,  the needed number of summands is substantially smaller 
 (Corollaries \ref{ts2} and \ref{cs2}). This is obtained as a consequence of our second main result  (Theorem \ref{secondmain}) which states that, under appropriate assumptions,
every square-zero matrix is a difference of two matrices from $f(A)$.

To the best of our knowledge, results of the type just described are new even for multilinear polynomials. This is interesting 
 in light of the L'vov-Kaplansky conjecture which states that if
$F$ is an infinite field  and 
 $f$ is a multilinear polynomial that is neither an identity  nor a central polynomial of $A=M_n(F)$, then $f(A)$ is equal to either $[A,A]$ or $A$. Interest in this old problem has been revived
 rather recently, starting with the work \cite{KBMR}
by Kanel-Belov, Malev, and Rowen  in which they confirmed the conjecture for the case where $n=2$ and $F$ is quadratically closed. Since then, quite a few papers studying this  and related problems appeared; for details, 
we refer %the reader
 to the survey paper \cite{Retc}. In spite of many efforts, however,  the problem seems to be far from being resolved.
Some of our results from Section \ref{s4} can  be viewed as
  rough approximate versions of the  L'vov-Kaplansky conjecture, or at least as an attempt to approach this problem from a different perspective. 
	
Last but not least, we point out that  the central theorems of Section \ref{s4}  and their corollaries may be viewed	as Waring type results, analogous to those on images of words that have been studied thoroughly  (and successfully)  by group theorists. 
We will  discuss this at the 
end of the paper.

The main ingredients in the proofs of Section \ref{s4}  are the PI theory,  the Lie theory of associative rings, and the theory of expressing commutators through square-zero elements. It is not 
surprising that these theories can be useful in the study of commutators and noncommutative polynomials. 
 The idea to combine them in proofs, however, seems to be new.

\section{On the relation between $[A,A]$ and {\rm span}\,$f(A)$}\label{s3}
%The containment of commutators in the linear spans of images of polynomials}\label{s3}

This section is divided into three subsections. The first one contains an important lemma and  some other preliminaries.

\subsection{Basic lemma}\label{sub21}
If $U$ and $V$ are subspaces of  an algebra $A$, we write  $[U,V]$ for the linear span of all commutators
$[u,v]$ with $u\in U$ and $v\in V$. 
Recall that a linear subspace $L$ of $A$ is a called a {\em Lie ideal} of $A$ if  
$[L,A]\subseteq L$.
 In \cite{BK1}, it was noticed that, as long as the field $F$ is infinite,  span$\,f(A)$ is  a Lie ideal of $A$ for every
polynomial  $f\in F\langle \mathcal X\rangle$ (and hence, in the case where $A=M_n(F)$, it is equal to either  $\{0\}$,  the set $F1$ of scalar matrices,  the set
$[A,A]$ of traceless matrices, or  $A$).
 This was then used in problems originally arising from Connes' embedding conjecture and some other topics of  functional analytic flavor. Our goals in the present paper are different, but this observation from \cite{BK1}
will be our starting point here as well. More precisely, we will use it in the proof of our first lemma.  Before stating it,  we 
record a few more definitions and  preliminary observations.

Let $f=f(X_1,\dots,X_m)\in F\langle \mathcal X\rangle$ be any polynomial. Recall that $f$ is a {\em (polynomial) identity} of the $F$-algebra $A$ 
if $f(A) = \{0\}$.  We say $A$ is a {\em PI-algebra} if a nonzero polynomial is an identity of $A$.
If $f(A)$ is contained in the center of $A$ but $f$ is not an identity of $A$, then we say that $f$ is a {\em central polynomial} of $A$ (one often assumes that central polynomials must have zero constant term, but we will avoid this additional requirement in order to make the exposition simpler).

 For the purposes of this paper, we define 
%For any  polynomial  $f=f(X_1,\dots,X_n)\in F\langle \mathcal X\rangle$, we define the polynomial
$\widehat{f}  \in F\langle \mathcal X\rangle$ by
$$\widehat{f}(X_1,\dots,X_{2m})= \big[f(X_1,\dots,X_m),f(X_{m+1},\dots,X_{2m})\big].$$
Note that $\widehat{f}$ is an identity of $A$ if and only if $f(A)$ is a commutative set. 

Finally, we point out that, if $A$ is unital,  $f(A)$ is  invariant under conjugation, i.e.,
\begin{equation*} \label{conj}
af(A)a^{-1}= f(A)
\end{equation*}
for every invertible element $a\in A$. Indeed, this follows from
%\begin{equation} \label{conj2}
$$af(a_1,\dots,a_m)a^{-1} = f(aa_1a^{-1},\dots,aa_ma^{-1}),$$
%\end{equation}
where $a_1,\dots,a_m$ are arbitrary elements in $A$.

The  following lemma, which is an outcome of Herstein's Lie theory of associative rings, will be used in several proofs.

\begin{lemma}\label{lt3}
Let $A$ be an algebra over an infinite field $F$ and let 
 $f\in F\langle \mathcal X\rangle$.
\begin{enumerate}
\item[{\rm (a)}] If
   $[A,A]\not\subseteq {\rm span}\,f(A)$, then $\widehat{f}$ is an identity of a nonzero homomorphic image of $A$.
%the ideal generated by  $\widehat{f}(A)$ is proper.
\item[{\rm (b)}] If $A$ is simple and $\widehat{f}$ is an identity of $A$, then $f$ is either an identity or a central polynomial of $A$.
\end{enumerate}
\end{lemma}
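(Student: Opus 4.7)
My plan is to handle (b) first, since it carries the core Herstein-style Lie-theoretic input in its cleanest form, and then to build (a) on top of it.

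For (b), set $L=\mathrm{span}\,f(A)$. The assumption that $\widehat{f}$ is an identity of $A$ says precisely that any two elements of $f(A)$ commute, hence $[L,L]=0$. Because $F$ is infinite, the observation of \cite{BK1} applies and $L$ is a Lie ideal of $A$. Thus $L$ is a Lie ideal of the simple algebra $A$ with $[L,L]=0$, and Herstein's classical theorem (``a commutative Lie ideal of a simple ring is central'') forces $L\subseteq Z(A)$. Therefore $f(A)\subseteq Z(A)$, so $f$ is either an identity (when $f(A)=\{0\}$) or a central polynomial.

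For (a), keep $L=\mathrm{span}\,f(A)$. By bilinearly expanding commutators of linear combinations, the linear span of $\widehat{f}(A)$ coincides with $[L,L]$. Let $J$ be the two-sided ideal of $A$ generated by $\widehat{f}(A)$, or equivalently by $[L,L]$; then $\widehat{f}$ is automatically an identity of $A/J$, so the entire problem reduces to showing $J\neq A$, in which case $A/J$ is the required nonzero homomorphic image. I would argue by contradiction: suppose $J=A$, and try to derive $[A,A]\subseteq L$, contradicting the hypothesis. A natural route is reduction to primitive quotients $A/P$. For any primitive ideal $P$, the image $\bar L$ is a Lie ideal of $A/P$, and $J=A\not\subseteq P$ forces $[L,L]\not\subseteq P$, so $\bar L$ is non-commutative in the primitive (hence prime) ring $A/P$. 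The content of (b), combined with the broader Herstein machinery for Lie ideals in primitive rings, then says that $\bar L$ is large enough to contain (a suitable portion of) $[A/P,A/P]$. Assembling this across primitive ideals and handling the Jacobson radical separately should give the global containment $[A,A]\subseteq L$.

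The main obstacle is this last transfer: pushing the simple-ring Herstein dichotomy used in (b) up to a conclusion about $[A,A]\subseteq L$ in general $A$ through primitive quotients and the Jacobson radical, with care for the non-simple primitive case. Everything else --- the identification $\mathrm{span}\,\widehat{f}(A)=[L,L]$, the construction of $J$, and the observation that $\widehat{f}$ automatically vanishes on $A/J$ --- is essentially formal.
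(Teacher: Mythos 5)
Both halves of your proposal have genuine gaps. In (b), the ``classical theorem'' you invoke --- a commutative Lie ideal of a simple ring is central --- is \emph{false} without a characteristic restriction: if ${\rm char}(F)=2$ and $A$ is $4$-dimensional over its center, commutative Lie ideals need not be central (in $M_2(F)$ with ${\rm char}(F)=2$, the span of $1$ and $e_{12}$ is a commutative Lie ideal not contained in the center). Since the lemma assumes nothing about the characteristic, this exceptional case must be dealt with, and it cannot be closed using only the Lie-ideal property of $L={\rm span}\,f(A)$. The paper handles it by passing to $S=K\otimes_Z A\cong M_2(K)$ ($K$ the algebraic closure of the center) and exploiting the extra feature that ${\rm span}\,f(M_2(K))$ is invariant under conjugation by invertible matrices --- a property a general commutative Lie ideal does not have --- and then a short matrix computation forces centrality. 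Your argument for (b) is complete only when ${\rm char}(F)\neq 2$ or $\dim_Z A\neq 4$.

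In (a), your reduction to showing that the ideal $J$ generated by $\widehat{f}(A)$ (equivalently by $[L,L]$) is proper is correct, but the step you yourself flag as the ``main obstacle'' is exactly the missing proof, and the route you sketch would not deliver it: Herstein's theory in a prime or primitive quotient only gives that a noncommutative Lie ideal contains $[M,R]$ for \emph{some} nonzero ideal $M$, not $[R,R]$ itself; and even if you had $[A/P,A/P]\subseteq \overline{L}$ for every primitive $P$, that only yields $[A,A]\subseteq L+P$ for each $P$, which does not assemble into the global containment $[A,A]\subseteq L$ (nor is it clear how to ``handle the Jacobson radical separately''). The ingredient you are missing is the sharp form of Herstein's result, stated explicitly as Proposition 2.2 of \cite{BKS}: if $L$ is a Lie ideal of $A$ and $I$ is the ideal generated by $[L,L]$, then $[A,I]\subseteq L$. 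With this in hand the argument is immediate and needs no primitive quotients at all: if $I=A$ then $[A,A]\subseteq L$, contradicting the hypothesis, so $I$ is proper and $\widehat{f}$, whose values lie in $I$, is an identity of the nonzero algebra $A/I$. Also note that the paper's proof of (a) does not rely on (b), whereas your plan routes (a) through (b) and thus inherits the characteristic-$2$ gap as well.
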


\begin{proof}   As  mentioned at the beginning of the section,  $L={\rm span}\,f(A)$ is a Lie ideal of $A$. %(specifically, this is is \cite[Theorem 2.3]{BK1}).

(a) Let $I$ be the ideal of $A$ generated by $\widehat{f}(A)$ 
  (i.e., the ideal generated by $[L,L]$). A general result on Lie ideals, essentially due to Herstein \cite[pp. 4--5]{Her} and explicitly established in \cite[Proposition 2.2]{BKS}, states that $L$ contains $[A,I]$.
Since $[A,A]\not\subseteq L$ by assumption, $I$ is  a proper ideal of $A$. Observe that $\widehat{f}(A)\subseteq I$ implies that  $\widehat{f}$ is an identity of $A/I$.

(b) The condition that $\widehat{f}$ is an identity of $A$ means  that $L$ is a commutative Lie ideal of $A$. Our goal is to prove that $L$ is contained in the center $Z$ of $A$. This follows  from  the two results stated on page 9 of  \cite{Her} (or, more directly, from \cite[Theorem 4]{LM}), unless $F$ has characteristic $2$ and $A$ is $4$-dimensional over $Z$. This case is indeed exceptional (see \cite[p.\,6]{Her}), but not in our situation as we will show in the next paragraph.

Thus, assume that  $F$ has characteristic $2$ and that $A$ is of dimension  $4$ over $Z$. 
Let $K$ be the algebraic closure of  $Z$ and let
 $S = K\otimes_Z A$ be the scalar extension of $A$ to $K$. Note that
$S\cong M_2(K)$ by Wedderburn's Theorem,  and that
  $\widehat{f}$ is an identity of $S$ (see \cite[Theorem 6.29]{INCA}). Thus,
$V={\rm span}\,f(M_2(K))$ is a commutative Lie ideal of $M_2(K)$,
which, in view of the remark preceding the statement of the lemma, additionally satisfies
%for any invertible matrix $a$ and any matrices $a_i$ we see that $V$ additionally satisfies
$aVa^{-1}= V$ for every invertible  $a\in M_2(K)$. We claim that this implies that $V$ is contained in the center of $M_2(K)$. Suppose this is not true, so that $V$ contains a nonscalar matrix $v$. If $v$ is a diagonal matrix, then, by commuting it with the matrix unit $e_{12}$, we see that $V$ also contains nondiagonal matrices. Without
loss of generality, we may assume that $v=\left[ \begin{smallmatrix} \alpha & \beta \cr \gamma & \delta \cr \end{smallmatrix} \right]$ with $\beta\ne 0$. Commuting $v$ with $e_{11}$, we see that 
$w=\left[ \begin{smallmatrix} 0 & \beta \cr \gamma & 0 \cr \end{smallmatrix} \right]\in V$, and commuting $w$ with $e_{21}$ we see that $V$ contains  the identity matrix. Further,
$$\left[ \begin{matrix} 1 & 1 \cr 0 & 1 \cr \end{matrix} \right]w\left[ \begin{matrix} 1 & 1 \cr 0 & 1 \cr \end{matrix} \right]^{-1} =\left[ \begin{matrix} \gamma & \beta+\gamma \cr \gamma & \gamma \cr \end{matrix} \right]\in V,$$ from which one easily infers that
 $V$ contains the matrix unit $e_{12}$. Hence, 
$$\left[ \begin{matrix} 0 & 1 \cr 1 & 0 \cr \end{matrix} \right]e_{12}\left[ \begin{matrix} 0 & 1 \cr 1 & 0 \cr \end{matrix} \right]^{-1} = e_{21}\in V,$$ which 
 is a contradiction since $e_{12}$ and $e_{21}$ do not commute. Therefore, $V$ is contained in the center of $M_2(K)$. This readily implies that $f(A)\subseteq Z$.
\end{proof}

% The next example justifies, at least partially, the assumption that $A$ has no nonzero nil homomorphic images.

%If a polynomial $f$ can be written as a sum of  commutators of some polynomials, then its image $f(A)$ obviously lies in $[A,A]$. The following corollary of
%Theorem \ref{t3}  therefore holds. 

%\begin{corollary}\label{c3}
%Let $A$ be a unital  algebra over an infinite field $F$.  Then  the following two statements are equivalent
%for every polynomial $f\in F\langle \mathcal X\rangle$ which is a sum of commutators in $ F\langle \mathcal X\rangle$:
%\begin{enumerate}
%\item[{\rm (i)}] $f$ is neither an identity nor a central polynomial on any nonzero homomorphic image of $A$.
%\item[{\rm (ii)}] $[A,A]={\rm span}\,f(A)$ and the ideal generated by $f(A)$ equals $A$.
%\end{enumerate}
%\end{corollary}

\subsection{When is ${\rm span}\,f(A)$ equal to $A$?}
Our first theorem %is  an application of Lemma \ref{lt3}. It
shows that 
the 
 condition  that ${\rm span}\,f(A)=A$ holds for any  nonconstant polynomial  is equivalent to the condition that this holds for the particular polynomial $[X_1,X_2]$.

\begin{theorem}\label{t}
Let $A$ be an algebra over an infinite field $F$. If  $[A,A]=A$, then ${\rm span}\,f(A)=A$ for every nonconstant polynomial $f\in F\langle \mathcal X\rangle$.
\end{theorem}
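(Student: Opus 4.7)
The plan is to argue by contradiction. Suppose $L := \mathrm{span}\,f(A) \neq A$. Since $[A,A]=A$, we have $[A,A]\not\subseteq L$, so Lemma~\ref{lt3}(a) supplies a proper ideal $I$ of $A$ such that $\widehat{f}$ is an identity of the nonzero quotient $B := A/I$. Note that $\widehat{f}$ is a nonzero polynomial: in the free algebra, the top-degree homogeneous components of $f(X_1,\dots,X_m)$ and $f(X_{m+1},\dots,X_{2m})$ involve disjoint sets of variables, so a monomial in their product in which all $X_i$ with $i\leq m$ appear first cannot cancel against one in which those letters appear last. Hence $B$ is a nonzero PI-algebra, and the equality $[A,A]=A$ plainly descends to $[B,B]=B$, since every element of $B$ lifts to a sum of commutators in $A$.

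The heart of the plan is to extract a simple quotient of $B$ and show that it cannot accommodate $[\bar B,\bar B]=\bar B$. Assuming $A$ (and hence $B$) is unital, Zorn's lemma produces a maximal ideal $M\subsetneq B$, and the quotient $\bar B := B/M$ is simple, unital, and still satisfies the identity $\widehat{f}$. By Kaplansky's theorem, a simple PI-algebra is finite-dimensional over its center $Z$, so the reduced trace $\mathrm{trd}\colon\bar B\to Z$ is a nonzero $Z$-linear functional vanishing on all commutators; therefore $[\bar B,\bar B]\subseteq\ker(\mathrm{trd})\subsetneq\bar B$, contradicting the equality $[\bar B,\bar B]=\bar B$ inherited from $[A,A]=A$. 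As a minor variant one may invoke Lemma~\ref{lt3}(b) on the simple algebra $\bar B$ to conclude that $f$ itself is an identity or a central polynomial of $\bar B$, yielding another route to the PI structure, but the trace (or an analogous reduced-dimension) argument is still needed to close out.

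The main obstacle, in my view, is that the paper's setup allows $A$ to be nonunital, in which case $B$ may fail to have a maximal ideal and Zorn's lemma is unavailable. To handle that case uniformly, I would split on whether $B$ coincides with its Jacobson radical: if not, $B$ has a nonzero primitive quotient, which by Kaplansky is finite-dimensional central simple, and the trace argument goes through as above; if $B$ does equal its Jacobson radical, Amitsur's theorem forces $B$ to be locally nilpotent, and then iterating the equality $[B,B]=B$ gives $B\subseteq B^n$ for every $n\geq 1$, so any element of $B$ lies in a finitely generated nilpotent subalgebra and must vanish, forcing $B=0$, the required contradiction.
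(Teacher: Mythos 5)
Your opening reduction is the same as the paper's: apply Lemma \ref{lt3}(a), note that $\widehat{f}$ is a nonzero polynomial when $f$ is nonconstant, and conclude that some nonzero homomorphic image $B=A/I$ would be a PI-algebra with $[B,B]=B$. Your semiprimitive branch (a primitive or simple unital quotient is, by Kaplansky's theorem, finite-dimensional over its center, and the reduced trace is a nonzero functional vanishing on commutators) is correct; it is essentially the argument the paper uses for Theorem \ref{T2}. The paper itself, however, finishes Theorem \ref{t} differently: it invokes Kanel-Belov's theorem \cite{KB} that no nonzero PI-algebra $R$ satisfies $[R,R]=R$, which disposes of \emph{all} quotients at once, with no unitality, maximal ideals, or semiprimitivity needed.

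The genuine gap is your radical case, which is exactly the hard case that \cite{KB} is cited for. First, the claim that a PI-algebra equal to its Jacobson radical must be locally nilpotent is false: $B=xF[[x]]$ is commutative (hence PI) and coincides with its Jacobson radical, yet it is not even nil; the correct statement in this direction is that a \emph{nil} PI-algebra is locally nilpotent, and the Jacobson radical of a PI-algebra need not be nil. Second, even granting local nilpotence, the final deduction fails: $[B,B]=B$ does give $B=B^n$ for all $n$, but an element $b\in B^n$ is a sum of products of elements that vary with $n$, so $b$ need not lie in the $n$-th power of any fixed finitely generated (hence nilpotent) subalgebra. Indeed, ``locally nilpotent and $B^2=B$'' is perfectly possible: take the commutative algebra with basis $\{e_q : q\in\mathbb{Q}\cap(0,1)\}$ and $e_qe_r=e_{q+r}$ if $q+r<1$ and $0$ otherwise; it is locally nilpotent and every $e_q$ equals $e_{q/2}e_{q/2}$. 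So your argument, as written, only rules out quotients possessing a primitive (e.g., simple unital) image; to close the radical case you would either need to reprove the substance of Kanel-Belov's theorem or cite it, as the paper does.
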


\begin{proof} 
In \cite{KB}, Kanel-Belov proved that no nonzero   PI-algebra $R$ coincides with $[R,R]$. Therefore, $[A,A]=A$ implies that  no nonzero homomorphic image of $A$ is a PI-algebra.
The desired conclusion thus follows from
Lemma \ref{lt3}\,(a).
\end{proof}

Theorem \ref{t} covers the main results of \cite{BK2} as well as, along with a theorem on commutators from \cite{Pop}, \cite[Corollary 3.10]{R}
(this result,   however, gives an additional information on the number of summands). 

\subsection{When does ${\rm span}\,f(A)$ contain $[A,A]$?}
We start this subsection with a variation  of Theorem \ref{t}.

\begin{theorem}\label{T2}
Let $A$ be a unital algebra over a field $F$ with {\rm char}$(F)=0$. If  $1\in [A,A]$,
 then $[A,A]\subseteq {\rm span}\,f(A)$ for every nonconstant polynomial $f\in F\langle \mathcal X\rangle$.
\end{theorem}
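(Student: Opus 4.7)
The plan is to mimic the proof of Theorem~\ref{t}, swapping in a characteristic-zero strengthening of Kanel-Belov's theorem. Suppose, for contradiction, that $[A,A]\not\subseteq {\rm span}\,f(A)$. Lemma~\ref{lt3}\,(a) then yields a proper ideal $I$ of $A$ such that $B:=A/I$ is a nonzero algebra in which $\widehat{f}$ is an identity. Since $f$ is nonconstant, $f(X_1,\dots,X_m)$ and $f(X_{m+1},\dots,X_{2m})$ are nonconstant elements of $F\langle \mathcal X\rangle$ in disjoint variables, and hence do not commute in the free algebra; so $\widehat{f}$ is itself a nonzero polynomial, and $B$ is a nonzero PI-algebra. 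Moreover $B$ is unital with unit $\bar{1}$, and the hypothesis $1\in [A,A]$ passes to the quotient to give $\bar{1}\in [B,B]$.

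The heart of the argument will be the following characteristic-zero assertion, which replaces the role of Kanel-Belov's theorem in the proof of Theorem~\ref{t}:

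\emph{If $B$ is a nonzero unital PI-algebra over a field of characteristic $0$, then $1\notin [B,B]$.}

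Granting this, we obtain the desired contradiction and the theorem is proved. To establish the assertion I would invoke Kaplansky's theorem on primitive PI-algebras: some primitive quotient $B/P$ of $B$ is isomorphic to $M_n(D)$ for a division algebra $D$ finite-dimensional over its center $K$, where $K$ is a field extension of $F$ and hence of characteristic $0$. The reduced trace $\tau\colon M_n(D)\to K$ is a nonzero $K$-linear map that vanishes on $[M_n(D),M_n(D)]$ but satisfies $\tau(1)=n\sqrt{\dim_K D}\ne 0$ in $K$. Consequently $1\notin [M_n(D),M_n(D)]$, which contradicts the fact that $\bar{1}\in [B,B]$ survives in the further quotient $B/P$.

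The main obstacle I anticipate is precisely this final step: producing a trace-like functional that detects $1$. Everything else is a direct transcription of the proof of Theorem~\ref{t}, with Lemma~\ref{lt3}\,(a) doing the structural work. The char $0$ hypothesis enters only through the non-vanishing of $\tau(1)$, which makes clear why it cannot be dropped (in prime characteristic $p$, the identity $1_{M_p(F)}$ really is a sum of commutators).
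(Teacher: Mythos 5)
Your proposal is correct and follows essentially the same route as the paper: apply Lemma~\ref{lt3}\,(a), pass to a simple/primitive quotient of the resulting PI-quotient, invoke Kaplansky's theorem, and kill $1\in[\,\cdot\,,\cdot\,]$ with a trace in characteristic $0$. The only cosmetic difference is that the paper takes a maximal ideal and extends scalars to the algebraic closure of the center so as to use the ordinary trace on $M_n(K)$, whereas you use the reduced trace on $M_n(D)$ directly; these are the same argument in different packaging.
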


\begin{proof}
Suppose the theorem does not hold. Then,
 by
 Lemma \ref{lt3}\,(a), there exists a proper ideal $I$ of $A$ such that $A/I$ is a PI-algebra.
Choose a maximal ideal $M$ of $A$  containing $I$.
 Then $R=A/M$ is a simple PI-algebra.
Hence, $R$ is finite-dimensional over its center $Z$ \cite[Lemma 7.53]{INCA}.
Obviously, $1\in [A,A]$ implies $1\in [R,R]$. Let $K$ be the algebraic closure of $Z$ and let 
 $S = K\otimes_Z R$ be the scalar extension of $R$ to $K$. Note that
$1\in [S ,S]$. However, $S$ is isomorphic to $M_n(K)$ for some $n\ge 1$,  so we may consider  the trace of elements in $S$:
 the trace of $1$ is $n\ne 0$ (since char$(F)=0$), while the trace of any element in $ [S ,S]$ is $0$. This contradiction shows that $I=A$.
\end{proof}

\begin{example}
 Any Weyl algebra over a field of characteristic $0$ obviously satisfies 
 the conditions of  Theorem \ref{T2}  (as a matter of fact,
it also  satisfies the conditions of Theorem \ref{t}---see, e.g., \cite[Proposition 7]{M}). Note also that if  an algebra $A$  satisfies the conditions of 
 Theorem \ref{T2},
then so does $A\otimes B$ for any unital algebra $B$. 
\end{example}

The next example  shows that the assumption that char$(F)=0$  is necessary in Theorem \ref{T2}.

\begin{example}
Suppose  char$(F)$ is a prime number $p$. Let $A=M_p(F)$. Then $1$ has trace $0$ and is hence a commutator \cite{AM}. However, $[A,A]\not\subseteq {\rm span}\,f(A)$ if $f$ is either an identity or a  central polynomial of $A$.
\end{example}

 %In the first theorem derived from Lemma \ref{lt3} we provide conditions under which
We conclude this section with two results that give a more complete picture of when 
 ${\rm span}\,f(A)$ contains $[A,A]$. First we record a corollary to Lemma \ref{lt3}.

\begin{corollary}\label{lt3c}
Let $A$ be a simple algebra over an infinite field $F$. If  
 $f\in F\langle \mathcal X\rangle$  is 
	neither an identity nor a central polynomial of $A$, then
   $[A,A]\subseteq {\rm span}\,f(A)$.
\end{corollary}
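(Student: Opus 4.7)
The plan is to derive this as a quick contrapositive combining both parts of Lemma~\ref{lt3}, exploiting simplicity to force the ``nonzero homomorphic image'' in part~(a) to be $A$ itself.

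First I would argue by contradiction: suppose $[A,A] \not\subseteq \mathrm{span}\,f(A)$. By Lemma~\ref{lt3}\,(a), there is a nonzero homomorphic image of $A$ of which $\widehat{f}$ is an identity. Since $A$ is simple, the only nonzero homomorphic image of $A$ (up to isomorphism) is $A$ itself, so $\widehat{f}$ is an identity of $A$.

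Now I would invoke Lemma~\ref{lt3}\,(b): since $A$ is simple and $\widehat{f}$ is an identity of $A$, $f$ must be either an identity or a central polynomial of $A$. This directly contradicts the hypothesis on $f$, completing the proof.

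The corollary is essentially a packaging of the two parts of the lemma, so there is no real obstacle; the only thing to check carefully is that ``nonzero homomorphic image'' in part~(a) indeed forces equality with $A$ in the simple case, which is immediate since the kernel of any nonzero homomorphism is a proper ideal and hence zero.
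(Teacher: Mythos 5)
Your argument is correct and is exactly the route the paper intends: the corollary is stated without proof precisely because it follows by combining Lemma~\ref{lt3}\,(a) and (b), using simplicity to identify the nonzero homomorphic image with $A$ itself. Nothing is missing.
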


Our last theorem in this section is a sharpening of this corollary.
Recall that the commutator ideal $C_A$ of $A$ is the 
ideal  generated by all commutators in $A$.

\begin{theorem}\label{t3}
Let $A$ be a unital algebra over an infinite field $F$, and let $f\in F\langle \mathcal X\rangle$. 
 The following two statements are equivalent:
\begin{enumerate}
\item[{\rm (i)}] $f$ is neither an identity nor a central polynomial of any nonzero homomorphic image of $A$.
\item[{\rm (ii)}] $[A,A]\subseteq {\rm span}\,f(A)$ and $A$ is equal to its commutator ideal $C_A$.
\end{enumerate}
\end{theorem}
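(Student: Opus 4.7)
My plan is to prove each implication by reducing to a simple quotient, where Lemma~\ref{lt3} together with a short PI computation finishes the argument.

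For (i) $\Rightarrow$ (ii), I would treat the two conclusions of (ii) separately. To obtain $[A,A]\subseteq{\rm span}\,f(A)$, I would assume the contrary; Lemma~\ref{lt3}(a) then furnishes a proper ideal $I$ of $A$ on which $\widehat{f}$ is an identity. Enlarging $I$ to a maximal ideal $M$ (via Zorn's lemma), the simple quotient $A/M$ still satisfies $\widehat{f}(A/M)=0$, so Lemma~\ref{lt3}(b) makes $f$ an identity or a central polynomial of the nonzero homomorphic image $A/M$, contradicting (i). To obtain $A=C_A$, I would consider the commutative quotient $A/C_A$: if it is nonzero, the values $f(A/C_A)$ automatically lie in its own center, so $f$ is either an identity or a central polynomial there, once again contradicting (i).

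For (ii) $\Rightarrow$ (i), I would argue by contrapositive. Suppose $f$ is an identity or a central polynomial of some nonzero homomorphic image $B=A/I$. Applying the quotient map to $[A,A]\subseteq {\rm span}\,f(A)$ gives $[B,B]\subseteq {\rm span}\,f(B)\subseteq Z(B)$. Picking a maximal ideal $M$ of $B$ and passing to the simple quotient $S=B/M$, the relation $[S,S]\subseteq Z(S)$ persists (images of central elements remain central), so $S$ satisfies the polynomial identity $[[X_1,X_2],X_3]=0$. Since a simple PI-algebra is finite-dimensional and central simple over its center, after scalar extension to the algebraic closure $K$ of $Z(S)$ one obtains $M_n(K)$ for some $n\geq 1$, and this algebra still satisfies $[[X_1,X_2],X_3]=0$; a direct computation on matrix units (for instance $[[e_{11},e_{12}],e_{21}]=e_{11}-e_{22}$) forces $n=1$, so $S$ is commutative. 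Pulling $M$ back produces a proper ideal $J$ of $A$ with $A/J$ commutative, whence $C_A\subseteq J\subsetneq A$, contradicting the second half $A=C_A$ of (ii).

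The main obstacle is the closing step of the second direction: after reducing to $[B,B]\subseteq Z(B)$, one must rule out the possibility of nonzero "almost commutative" quotients $B$ with $B=C_B$, and the cleanest route seems to be the simple-PI reduction plus the matrix-unit computation showing $[[X_1,X_2],X_3]=0$ fails on $M_n$ for $n\geq 2$. The remaining manipulations are routine applications of Lemma~\ref{lt3} and ideal-quotient bookkeeping.
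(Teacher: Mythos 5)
Your proof is correct, and the direction (i) $\Rightarrow$ (ii) is essentially the paper's argument: a nonzero commutative quotient $A/C_A$ would make $f$ an identity or central polynomial of it, and failure of $[A,A]\subseteq{\rm span}\,f(A)$ is excluded via Lemma~\ref{lt3}(a), passage to a maximal ideal, and Lemma~\ref{lt3}(b). Where you genuinely diverge is the last step of (ii) $\Rightarrow$ (i). Both you and the paper reduce to a simple unital quotient $S$ in which every commutator is central, i.e. $[[S,S],S]=\{0\}$, and must conclude that $S$ is commutative. The paper does this by an elementary trick: from $[x,y]x=[x,yx]\in Z(S)$ it gets $[x,y]^2=[[x,y]x,y]=0$, and since the center of a simple algebra contains no nonzero nilpotent elements, all commutators vanish. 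You instead invoke Kaplansky's theorem (a simple unital PI-algebra is finite-dimensional over its center, the same fact the paper uses in the proof of Theorem~\ref{T2} via \cite[Lemma 7.53]{INCA}), extend scalars to get $M_n(K)$, observe that the multilinear identity $[[X_1,X_2],X_3]=0$ survives scalar extension, and rule out $n\ge 2$ by the computation $[[e_{11},e_{12}],e_{21}]=e_{11}-e_{22}\ne 0$. Both routes are valid; the paper's is more elementary and self-contained, while yours leans on structure theory that is nonetheless part of the paper's toolkit, so nothing is lost in rigor. One cosmetic remark: Lemma~\ref{lt3}(a) yields a proper ideal $I$ such that $\widehat{f}$ is an identity of the quotient $A/I$ (not ``on $I$''), which is clearly what you use in the next sentence.
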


\begin{proof}
(i)$\implies$(ii). 
 As $A/C_A$ 
 is a commutative algebra, every polynomial is either an identity or a central polynomial of $A/C_A$.
Thus,
it is enough to show that  $[A,A]\subseteq {\rm span}\,f(A)$.
Assume that this is not true. Then, by
Lemma \ref{lt3}\,(a), there is a proper ideal $I$ of $A$ such that $\widehat{f}$ is an identity of  $A/I$. Take a maximal ideal $M$ containing $I$. 
Then $\widehat{f}$  is an identity of the simple algebra $A/M$, and so  Lemma \ref{lt3}\,(b)  tells us that 
$f$ is either an identity or a central polynomial of $A/M$, which contradicts (i).

(ii)$\implies$(i). Suppose 
there exists a proper ideal $I$ of $A$ such that $f$ is an identity or a central polynomial of $A/I$.
This means that $[f(A),A]\subseteq I$, %  $$[f(a_1,\dots,a_m),a]\in I$$ for all $a_i,a\in A,$
 which along with 
$[A,A]\subseteq {\rm span}\,f(A)$  gives  $[[A,A],A]\subseteq I$. 
Pick a maximal ideal $M$ containing $I$. Then $R=A/M$ is a simple algebra satisfying
$[[R,R],R]=\{0\}$. That is, every commutator in $R$ lies in the center $Z$ of $R$;
in particular, $[x,y]x=[x,yx]\in Z$, and so $[x,y]^2 =[[x,y]x,y]=0$ for all $x,y\in R$. As the center of a simple  algebra cannot contain nonzero nilpotent elements, it follows that $R$ is commutative.
 That is, $[A,A]\subseteq M$, contradicting   $C_A=A$.
\end{proof}

We make two final comments. First, the necessity of the requirement in (ii) that $A=C_A$ is  evident from the case where $A$ is commutative. Second,
the statement (i) can be equivalently formulated as that the ideal generated by $[f(A),A]$ is equal to $A$; on the other hand, the statement (ii) concerns only the linear span of $f(A)$.

\section{Waring type results for images of polynomials}\label{s4}

This section consists of five subsections. %The main results are contained in the third one. 
The basic purpose of the first one  is to provide some tools needed in the proofs of the main results. However, as these tools are interesting in their own right,
 we will present them in a more general form than would be necessary for applications we have in mind.

\subsection{On locally linearly dependent polynomials}\label{sub31} 
Let $A$ be an algebra over a field $F$. We say that polynomials $f_1,\dots,f_s\in F\langle X_1,\dots,X_m\rangle$  are {\em $A$-locally linearly dependent} if for any 
$\overline{a}=(a_1,\dots,a_m)\in A^m$, the elements $f_1(\overline{a}),\dots,f_s(\overline{a})$ are linearly dependent over $F$ \cite{BK3}.
Otherwise, we say that they are $A$-locally linearly independent.

The usual linear dependence obviously implies the $A$-local linear dependence. The converse is not true. 
For example,  
 a single polynomial $f$ is $A$-locally linearly dependent if and only if $f$ is an identity of $A$. Similarly,
if the center of $A$ consists of scalar multiples of unity, then 
 $1,f$ are
 $A$-locally linearly dependent if and only if $f$ is either an identity or a central polynomial of $A$.
%if  a nonconstant polynomial $f$ is  central for $A$, then 
%the polynomials $1$ and $f$ are linearly independent, but $A$-locally linearly dependent.
 As another example, 
the linearly independent polynomials $1,X,\dots,X^n$
are $M_n(F)$-locally linearly dependent by
 the Cayley-Hamilton Theorem.  We remark, however, that the relation of the  
$M_n(F)$-local linear dependence can be viewed as  a  functional identity on $M_n(F)$,  which, in general, is not    a consequence of
 the Cayley-Hamilton Theorem; see \cite{BPS}.

Let $c_s$ denote the $s$th Capelli polynomial, i.e.,
$$c_s(X_1,\dots,X_s,Y_1,\dots,Y_{s-1})
= \sum_{\sigma\in S_s} {\rm sgn}(\sigma)X_{\sigma(1)}Y_1 X_{\sigma(2)}Y_2\dots Y_{s-1}X_{\sigma(s)}.
$$
We record two  remarks related to $c_s$.

\begin{remark}\label{re1}It is a standard fact that matrices $b_1,\dots,b_s\in A=M_n(F)$ are linearly dependent if and only if 
$$c_s(b_1,\dots,b_s,y_1,\dots,y_{s-1})=0$$
for all $y_1,\dots,y_{s-1}\in A$ \cite[Theorem 7.45]{INCA}.  Observe that this implies that
polynomials 
 $f_1,\dots,f_s$ are $A$-locally linearly dependent if and only if 
$$c_s(f_1,\dots,f_s,Y_1,\dots,Y_{s-1})$$ is an identity of $A$. 
\end{remark}

%We need one more remark. 

\begin{remark}\label{re2}
Let $F$ be an infinite field and let $A=M_n(F)$. 
Suppose $f_1,\dots,f_s,g_1,\dots,g_t\in F\langle X_1,\dots,X_m\rangle$ are such that
for every
$\overline{a}\in A^m$,
either $f_1(\overline{a}),\dots,f_s(\overline{a})$ or 
 $g_1(\overline{a}),\dots,g_t(\overline{a})$ are linearly dependent. We claim that then either 
$f_1,\dots,f_s$  or  
$g_1,\dots,g_t$ are $A$-locally linearly dependent. Indeed, %as mentioned in Remark \ref{re1}, 
our assumption can be stated as that either
$$c_{s}\big(f_1(\overline{a}),\dots,f_{s}(\overline{a}), y_1,\dots,y_{s-1}\big)=0$$
		for all $y_1,\dots,y_{s-1}\in A$, or
		$$c_{t}\big(g_1(\overline{a}),\dots,g_{t}(\overline{a}), y_1,\dots,y_{t-1}\big)=0$$
	for all $y_1,\dots,y_{t-1}\in A$. This implies that the product of the polynomials
	$$c_{s}\big(f_1,\dots,f_{s}, Y_1,\dots,Y_{s-1}\big)$$
	and 
	$$c_{t}\big(g_1,\dots,g_{t}, Y_1,\dots,Y_{t-1}\big)$$
	is an identity of $A$. 
We are thus in a position to apply the classical theorem by Amitsur \cite{A2} which
states that  the product of two polynomials is an identity of $M_n(F)$ (with $F$ infinite) only when  one of them is an identity. Our claim therefore follows from Remark \ref{re1}.
\end{remark}

Since the free algebra $ F\langle \mathcal X\rangle$ is a domain, the linear independence of  $f_1,\dots,f_s\in   F\langle \mathcal X\rangle$ implies the linear
independence of $hf_1,\dots,hf_s$ for every nonzero $h\in   F\langle \mathcal X\rangle$. We will now prove  a similar statement 
for the $M_n(F)$-local linear independence.

\begin{theorem}\label{int}
Let $F$ be an infinite field, let $A=M_n(F)$ with   $n\ge 2$, and let $h, f_1,\dots,f_s\in  F\langle X_1,\dots,X_m\rangle$.
  Assume that
$h$ is not an identity of $A$. 
If 
$f_1,\dots,f_s$ are $A$-locally linearly independent, then so are $hf_1,\dots,hf_s$.
\end{theorem}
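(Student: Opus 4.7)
The plan is a proof by contradiction. Assume that $hf_1,\dots,hf_s$ are $A$-locally linearly dependent. By Remark \ref{re1} this is equivalent to
\[
c_s(hf_1,\dots,hf_s,Y_1,\dots,Y_{s-1})
\]
being an identity of $A=M_n(F)$. A direct expansion of the Capelli polynomial, followed by pulling $h$ out of the leftmost position of each summand, gives the free-algebra identity
\[
c_s(hf_1,\dots,hf_s,Y_1,\dots,Y_{s-1}) \;=\; h\cdot c_s(f_1,\dots,f_s,\,Y_1 h,\,Y_2 h,\dots,Y_{s-1} h).
\]
Amitsur's theorem on products, as cited in Remark \ref{re2}, together with the assumption that $h$ is not an identity of $A$, then forces
\[
c_s(f_1,\dots,f_s,\,Y_1 h,\,Y_2 h,\dots,Y_{s-1} h)
\]
to be an identity of $A$.

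The next and key step is to remove the trailing factors of $h$. Since $h$ is not an identity, a standard generic-matrix argument (cf.\ \cite{INCA}) shows that, with $\Xi_1,\dots,\Xi_m$ the generic $n\times n$ matrices, the element $h(\Xi_1,\dots,\Xi_m)$ is a nonzero element of the generic matrix ring inside $M_n(F[\xi])$; because this ring embeds into the universal division algebra $UD_n\subseteq M_n(F(\xi))$, $\det h(\Xi)$ is a nonzero polynomial in $F[\xi]$. As $F$ is infinite, the set $U=\{\overline{a}\in A^m : h(\overline{a})\text{ is invertible}\}$ is thus a nonempty Zariski-open, and therefore Zariski-dense, subset of $A^m$. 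For every $\overline{a}\in U$ the map $y\mapsto y\,h(\overline{a})$ is a bijection of $A$ onto itself, so specializing the identity above at $X=\overline{a}$ and setting $y_i = z_i h(\overline{a})^{-1}$ yields
\[
c_s\!\big(f_1(\overline{a}),\dots,f_s(\overline{a}),\,z_1,\dots,z_{s-1}\big)=0 \quad\text{for all }z_1,\dots,z_{s-1}\in A.
\]

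Because this vanishing holds on the Zariski-dense set $U\times A^{s-1}$, the polynomial $c_s(f_1,\dots,f_s,Z_1,\dots,Z_{s-1})$ must itself be an identity of $A$; by Remark \ref{re1} this contradicts the $A$-local linear independence of $f_1,\dots,f_s$. The main obstacle in the plan is the invertibility step: one has to leave the free algebra and invoke generic-matrix theory to ensure that a non-identity polynomial on $M_n(F)$ actually takes an invertible value. Everything else is routine manipulation of the Capelli polynomial together with Amitsur's theorem on products.
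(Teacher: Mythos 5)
Your proof is correct, but it takes a genuinely different route from the paper's. The factorization
$c_s(hf_1,\dots,hf_s,Y_1,\dots,Y_{s-1})=h\cdot c_s(f_1,\dots,f_s,\,Y_1h,\dots,Y_{s-1}h)$
does hold in the free algebra and is cleaner than the paper's argument, which instead expands the Capelli polynomial along its first slot, runs an induction on $s$, and after one application of Amitsur's theorem invokes the fact that a relation $\sum_i f_i(\overline{a})y_1b_i=0$ for all $y_1$ with $b_s\neq 0$ forces linear dependence of the $f_i(\overline{a})$ (\cite[Lemma 7.42]{INCA}), followed by a second use of Amitsur's theorem via Remark \ref{re2}. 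You avoid the induction and the dependence lemma entirely, but at a price: your key step is that a non-identity $h$ of $M_n(F)$ attains invertible values on a nonempty (hence dense) Zariski-open set. That existence of an invertible value is precisely the Lee--Zhou statement which the paper derives only afterwards, in Corollary \ref{pl}, \emph{from} the theorem you are proving, so quoting that corollary here would be circular; you sidestep circularity by appealing to Amitsur's theorem that the algebra of generic matrices is a domain whose central quotient is the generic division algebra $UD_n$, which indeed yields $\det h(\Xi)\neq 0$ and makes your substitution $y_i=z_ih(\overline{a})^{-1}$ and the Zariski-density conclusion sound. Be aware, though, that this is markedly heavier machinery than anything the paper uses, and the citation ``(cf.\ \cite{INCA})'' does not actually cover it --- \cite{INCA} does not develop $UD_n$; you should cite Amitsur's generic-matrix theorem properly (e.g.\ Rowen or Procesi), or, more economically, cite \cite[Theorem 2.4]{LZ} directly for the invertible value, which is an independent earlier result and keeps your argument short. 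In sum, the paper's proof buys elementarity and self-containment; yours buys brevity and a slicker algebraic identity, provided the invertible-value input is properly sourced.
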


\begin{proof}
The  $s=1$
case follows from  the aforementioned Amitsur's theorem.
%Amitsur's theorem which  states that  the product of two polynomials is an identity of $M_n(F)$ (with $F$ infinite) only when  one of them is an identity \cite{A2}.
We may therefore assume that $s>1$ and  $hf_1,\dots,hf_{s-1}$ are $A$-locally linearly independent. Assume  further that $hf_1,\dots,hf_{s}$ are $A$-locally linearly dependent, and let us show that this leads to a contradiction.
%Thus, there exists a $\overline{u} = (u_1,\dots,u_m)\in A^m$ such that $h(\overline{u})f_1(\overline{u}),\dots,h(\overline{u})f_{s-1}(\overline{u})$ are linearly independent over $F$.

%Let $c_s$ denote the $s$th Capelli polynomial. Since $$h(\overline{a})f_1(\overline{a}),\dots,h(\overline{a})f_s(\overline{a})$$ are linearly dependent
%for every $\overline{a} = (a_1,\dots,a_m)\in A^m$, we have 
%$$c_s\big(h(\overline{a})f_1(\overline{a}),\dots,h(\overline{a})f_s(\overline{a}),y_1,\dots,y_{s-1}\big)=0$$
%for all $y_1,\dots,y_{s-1}\in A$ \cite[Theorem 7.45]{INCA}. 
According to Remark \ref{re1},  the polynomial
$$c_s(hf_1,\dots,hf_s,Y_1,\dots,Y_{s-1})$$
	is an identity of $A$. That is,
	\begin{align*}&\sum_{i=1}^s (-1)^{i-1} hf_iY_1c_{s-1}(hf_1,\dots,hf_{i-1},hf_{i+1},\dots,hf_s,Y_2,\dots,Y_{s-1})\\
	=&h\cdot \sum_{i=1}^s (-1)^{i-1} f_iY_1c_{s-1}(hf_1,\dots,hf_{i-1},hf_{i+1},\dots,hf_s,Y_2,\dots,Y_{s-1})
	\end{align*}
	is an identity (if $s=2$, it should be understood that $c_1(X)=X$). Since 
	$h$ is not an identity, Amitsur's theorem implies that the latter factor,
\begin{equation*}g=\sum_{i=1}^s  f_iY_1 \Big((-1)^{i-1}c_{s-1}(hf_1,\dots,hf_{i-1},hf_{i+1},\dots,hf_s,Y_2,\dots,Y_{s-1})\Big),
\end{equation*}
		is an identity.

		Take  $\overline{a}\in A^m$. Suppose that $$h(\overline{a})f_1(\overline{a}),\dots,h(\overline{a})f_{s-1}(\overline{a})$$ are linearly independent.
By Remark \ref{re1},   there are $y_2,\dots,y_{s-1}\in A$ such that
		$$b_{s}= (-1)^{s-1}c_{s-1}\big(h(\overline{a})f_{1}\overline{a}),\dots,h(\overline{a})f_{s-1}(\overline{a}),y_2,\dots,y_{s-1}\big)\ne 0.$$
	Since the polynomial $g$ is an identity,	 we have
		$$ f_{1}(\overline{a})y_1b_1 + f_{2}(\overline{a})y_1b_2  +\dots + f_{s-1}(\overline{a})y_1b_{s-1} +   f_{s}(\overline{a})y_1b_{s} =0$$
		for all $y_1\in A$ and some $b_1,\dots,b_{s-1}\in A$. As $b_s\ne 0$, this implies that 
		$ f_{1}(\overline{a}),\dots,  f_{s}(\overline{a})$ are linearly dependent
		 \cite[Lemma 7.42]{INCA}.
		
		We have thus shown that for each $\overline{a} \in A^m$, either 
		$$h(\overline{a})f_1(\overline{a}),\dots,h(\overline{a})f_{s-1}(\overline{a})\,\,\,\,\mbox{or}\,\,\,\,
f_1(\overline{a}),\dots,f_{s}(\overline{a})$$
are linearly dependent.  Hence, by Remark \ref{re2}, either 
		$$ hf_1,\dots,hf_{s-1}\,\,\,\,\mbox{or}\,\,\,\, f_1,\dots,f_{s}$$
		are $A$-locally
		linearly dependent. However, this 
  contradicts our initial assumptions.
\end{proof} 
%Hence, by Remark \ref{re2}, either $$hf_1,\dots,hf_{s-1}$ 
%are
% $A$-locally linearly independent or
 %$f_1,\dots,f_s$  are $A$-locally linearly independent,

% Note that 
%		 In light of  \cite[Theorem 7.45]{INCA},
 %this can be equivalently stated as that either
	%	$$c_{s-1}\big(h(\overline{a})f_1(\overline{a}),\dots,h(\overline{a})f_{s-1}(\overline{a}), y_2,\dots,y_{s-1}\big)=0$$
%		for all $y_2,\dots,y_{s-1}\in A$ or
	%	$$c_{s}\big(f_1(\overline{a}),\dots,f_{s}(\overline{a}), y_1,\dots,y_{s-1}\big)=0$$
	%for all $y_1,\dots,y_{s-1}\in A$. This implies that the product of the polynomials
	%$$c_{s-1}\big(hf_1,\dots,hf_{s-1}, Y_2,\dots,Y_{s-1}\big)$$
	%and 
	%$$c_{s}\big(f_1,\dots,f_{s}, Y_1,\dots,Y_{s-1}\big)$$
	%is an identity of $A$. But then, by Amitsur's theorem, one of them is an identity. 
%Using  \cite[Theorem 7.45]{INCA} again, we see that
%However, this contradicts our assumption that
	%$hf_1,\dots,hf_{s-1}$ as well as $f_1,\dots,f_s$  are $A$-locally linearly independent.

%The following lemma follows from \cite[Theorem 2.4]{LZ}, but we will give a different proof.

\begin{corollary}\label{pl}
Let $F$ be an infinite field and let $A=M_n(F)$ with   $n\ge 2$.
 If $f\in  F\langle \mathcal X\rangle$  is not an identity of $A$, then 
there exists a positive integer  $k\le n$ such that $1,f,\dots,f^k$ are $A$-locally linearly dependent, but $f,\dots,f^k$ are $A$-locally linearly independent. In particular,
$f(A)$ contains an invertible matrix.
\end{corollary}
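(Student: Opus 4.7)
The plan is to induct downward from the Cayley--Hamilton bound and then extract invertibility from the resulting minimal dependence relation.

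First, I would observe that for any choice $\overline{a}\in A^m$, the matrix $f(\overline{a})$ satisfies its characteristic polynomial by the Cayley--Hamilton theorem, so $1,f(\overline{a}),\dots,f(\overline{a})^n$ are $F$-linearly dependent. Thus $1,f,\dots,f^n$ are $A$-locally linearly dependent, and I may let $k$ be the \emph{smallest} positive integer with the property that $1,f,\dots,f^k$ are $A$-locally linearly dependent. Clearly $k\le n$, and $k\ge 1$ because the single polynomial ``$1$'' is not $A$-locally linearly dependent (it is not an identity of $A$).

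Next I would verify the key property of this $k$, namely that $f,f^2,\dots,f^k$ are $A$-locally linearly \emph{independent}. For $k=1$ this is just the hypothesis that $f$ is not an identity of $A$. For $k\ge 2$, suppose toward contradiction that $f\cdot 1,\,f\cdot f,\,\dots,\,f\cdot f^{k-1}$ are $A$-locally linearly dependent. Since $f$ is not an identity of $A$, the contrapositive of Theorem \ref{int} (with $h=f$ and $f_i=f^{i-1}$) forces $1,f,\dots,f^{k-1}$ to be $A$-locally linearly dependent, contradicting the minimality of $k$. This is the only delicate step, but it is a clean application of Theorem \ref{int}.

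Finally, for the ``in particular'' assertion, I would pick $\overline{a}\in A^m$ such that $f(\overline{a}),\dots,f(\overline{a})^k$ are $F$-linearly independent (possible by the local linear independence just established). Since $1,f(\overline{a}),\dots,f(\overline{a})^k$ are linearly dependent, there exist $\alpha_0,\dots,\alpha_k\in F$, not all zero, with
\[
\alpha_0\cdot 1+\alpha_1 f(\overline{a})+\dots+\alpha_k f(\overline{a})^k=0.
\]
If $\alpha_0$ were zero, this would yield a nontrivial dependence among $f(\overline{a}),\dots,f(\overline{a})^k$, contradicting our choice of $\overline{a}$. Hence $\alpha_0\ne 0$, and rewriting the identity as
\[
f(\overline{a})\cdot\bigl(\alpha_1+\alpha_2 f(\overline{a})+\dots+\alpha_k f(\overline{a})^{k-1}\bigr)=-\alpha_0\cdot 1
\]
exhibits $f(\overline{a})$ as having a right inverse in the finite-dimensional algebra $M_n(F)$, hence as an invertible matrix lying in $f(A)$.
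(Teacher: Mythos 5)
Your proof is correct and follows essentially the same route as the paper's own argument: take the minimal $k$ furnished by Cayley--Hamilton, use Theorem \ref{int} (you via its contrapositive, the paper directly) to pass from the independence of $1,f,\dots,f^{k-1}$ to that of $f,\dots,f^k$, and then observe that any dependence relation at a point where $f(\overline{a}),\dots,f(\overline{a})^k$ are independent must have nonzero constant term, forcing $f(\overline{a})$ to be invertible.
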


\begin{proof} The Cayley-Hamilton Theorem
tells us that  $1,f,\dots,f^n$ are $A$-locally linearly dependent. Let $k\le n$ be the smallest positive integer such that
 $1,f,\dots,f^k$ are $A$-locally linearly dependent. Then  $1,f,\dots,f^{k-1}$ are $A$-locally linearly independent, and hence, by Theorem \ref{int}, so are
$f,\dots,f^k$. Therefore, there exists an $a\in f(A)$ such that 
$\lambda_01 + \lambda_1 a + \dots + \lambda_k a^k =0$ for some $\lambda_i\in F$ with $\lambda_0\ne 0$. Thus, $a$ is invertible.
\end{proof}

\begin{remark}The last statement of this corollary was proved earlier, by a different method, by Lee and Zhou \cite[Theorem 2.4]{LZ}. We will use it in
the proofs of Theorems \ref{tder} and \ref{tt2}.
Incidentally, the assumption that $F$ is infinite, which will be present in our main results, is indispensable in this corollary---see \cite[Example on p.\,294]{C}.
 \end{remark}

If $f$ is a central polynomial for $A=M_n(F)$, then $f(A)$ consists only of invertible matrices and (possibly) $0$. The following example, borrowed from \cite{Sp},
shows that this may also hold  for  noncentral polynomials (for more profound
examples, see \cite[Corollary 3.3 and Proposition 4.1]{Sp}).

\begin{example}\label{ekk}
Let $f=[X_1,X_2]^3$ and $A=M_2(F)$. As is well-known, $[X_1,X_2]^2$ is a central polynomial for $M_2(F)$, in fact, $[a,b]^2 = -\det([a,b])1$ for all $a,b\in M_2(F)$.  
Therefore, $f(a,b)= -\det([a,b])[a,b]$, and so, besides $0$, $f(A)$ contains only invertible elements (none of which is a  scalar multiple of the identity if char$(F)\ne 2$).
\end{example}

This example also shows that the multiplicative semigroup generated by the image of a polynomial  (which is neither an identity nor central) in $M_n(F)$ may be a relatively small subset of  $M_n(F)$. Therefore, when seeking for analogues of Waring's problem for groups  in the context of associative algebras (see comments at the end of the paper), it is more appropriate to consider the linear span  of the image of a noncommutative polynomial, rather than the 
 multiplicative semigroup it generates.

Our next goal is to prove  a theorem concerning the algebraic multiplicity of eigenvalues of matrices in $f(A)$. 
But first, some preliminaries.  
%To this end, we need an elementary linear algebraic lemma. But first, we make some comments. 
By ad$_a$ we denote the linear map defined by ad$_a(x)=[a,x]$. Observe that 
$${\rm ad}_a^k(x) = \sum_{i=0}^{k} (-1)^i {k \choose i}a^{k-i} x a^{i}.$$ 
This shows that $u^s =0$ implies ${\rm ad}_u^{2s-1}=0$.

\begin{lemma}\label{lad}
Let $F$ be any field, let $A=M_n(F)$ with   $n\ge 2$, let $\frac{n}{2} < s \le n$, let $u\in M_s(F)$ be a nilpotent  matrix, and let $a\in A$ be a matrix of the form $ \left[ \begin{smallmatrix} u & 0 \cr 0 & * \cr \end{smallmatrix} \right]$. Then, for every $k\ge 2s-1$, ${\rm ad}_a^{k}(A)$ contains no invertible matrices.
\end{lemma}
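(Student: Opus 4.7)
The plan is to exploit the block-diagonal structure of $a$ together with the nilpotency of $u$, reducing everything to a statement about $\mathrm{ad}_u$. Writing $a=\left[\begin{smallmatrix} u & 0 \cr 0 & v \end{smallmatrix}\right]$ with $v\in M_{n-s}(F)$, and decomposing an arbitrary $x\in A$ in the compatible block form $x=\left[\begin{smallmatrix} x_1 & x_2 \cr x_3 & x_4 \end{smallmatrix}\right]$ with $x_1\in M_s(F)$, a direct computation shows that the $(1,1)$-block of $[a,x]$ equals $[u,x_1]=\mathrm{ad}_u(x_1)$. Because the off-diagonal blocks of $a$ are zero, the $(1,1)$-block of $\mathrm{ad}_a(y)$ depends only on the $(1,1)$-block of $y$, so a straightforward induction on $k$ gives that the $(1,1)$-block of $\mathrm{ad}_a^k(x)$ is $\mathrm{ad}_u^k(x_1)$.

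I would then invoke the identity
\[
\mathrm{ad}_u^k(y) = \sum_{i=0}^{k} (-1)^i \binom{k}{i} u^{k-i} y u^i
\]
recalled immediately before the statement of the lemma. Since $u^s=0$, every term with $k-i\ge s$ or $i\ge s$ vanishes, and when $k\ge 2s-1$ every $i$ between $0$ and $k$ satisfies at least one of these two inequalities, so $\mathrm{ad}_u^k=0$. Hence for all $k\ge 2s-1$ and all $x\in A$, the upper-left $s\times s$ block of $\mathrm{ad}_a^k(x)$ vanishes.

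To finish, I would use the hypothesis $s>\tfrac{n}{2}$, i.e., $s>n-s$, via a simple dimension count: any $n\times n$ matrix over $F$ whose upper-left $s\times s$ block is zero has its first $s$ rows lying in the $(n-s)$-dimensional subspace of $F^n$ consisting of vectors with vanishing first $s$ coordinates. Since $s>n-s$, these $s$ rows are linearly dependent and the matrix is singular. This yields the desired conclusion that $\mathrm{ad}_a^k(A)$ contains no invertible matrices.

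The main obstacle is essentially clerical, namely confirming that the $(1,1)$-block identity persists under iteration of $\mathrm{ad}_a$, and the real content of the lemma lies in the interplay between the nilpotency index $2s-1$ of $\mathrm{ad}_u$ and the rank deficiency forced by $s>n/2$. I do not anticipate any serious technical difficulty.
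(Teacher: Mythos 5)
Your proposal is correct and follows essentially the same route as the paper: the same block decomposition showing that the $(1,1)$-block of $\mathrm{ad}_a^k(x)$ is $\mathrm{ad}_u^k(x_1)$, the same use of $u^s=0\Rightarrow \mathrm{ad}_u^{k}=0$ for $k\ge 2s-1$ via the binomial expansion of $\mathrm{ad}_u^k$, and the same final rank/dimension count using $s>n-s$ to conclude singularity. No gaps.
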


\begin{proof}
Take any $x=  \left[ \begin{smallmatrix} x' & * \cr * & * \cr \end{smallmatrix} \right]\in A$ where $x'\in M_s(F)$.  An easy computation shows that, for any $r\ge 1$,
 $${\rm ad}_a^r(x) = \left[ \begin{matrix} {\rm ad}_u^r(x') & * \cr * & * \cr \end{matrix} \right].$$ As pointed out  above, $u^s=0$ implies  ${\rm ad}_u^{2s-1} =0$.
Therefore,  $${\rm ad}_a^{2s-1}(x) =  \left[ \begin{matrix} 0 & * \cr * & * \cr \end{matrix} \right].$$
This readily implies that 
$${\rm ad}_a^{k}(x) =  \left[ \begin{matrix} 0 & * \cr * & * \cr \end{matrix} \right]$$
for every $k\ge 2s-1$.
The upper-right corner matrix has size $s\times (n-s)$.
Since $s> n-s$,  its rows are linearly dependent. Therefore, ${\rm ad}_a^{k}(x)$ is not invertible in $A$.
\end{proof}

If  char$(F)=0$, then the observation  that $u^s=0$ implies  ${\rm ad}_u^{2s-1} =0$ has a converse. That is, if  $b \in A=M_n(F)$ and  $d\ge 1$ are  such that ${\rm ad}_b^{d} =0$ on $A$, then 
there exists a scalar matrix $\alpha$ such that $u=b-  \alpha$ satisfies $u^{[(d+1)/2]} =0$ \cite{Her2} (see also \cite{MM}). Note that this in particular shows that 
  ${\rm ad}_b^{2s} =0$ implies 
  ${\rm ad}_b^{2s-1} =0$. 
	
\begin{theorem}\label{tder}
Let $F$ be a  field  with {\rm char}$(F)=0$ and let $A=M_n(F)$ with   $n\ge 2$.
 If $f=f(X_1,\dots,X_m)\in  F\langle \mathcal X\rangle$  is neither an identity nor a central polynomial of $A$, then $f(A)$ contains a matrix such that the algebraic multiplicity of any of its eigenvalues does not exceed $\frac{n}{2}$.
\end{theorem}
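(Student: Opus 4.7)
The plan is to argue by contradiction: suppose every matrix in $f(A)$ has some eigenvalue of algebraic multiplicity strictly exceeding $n/2$. Since this condition is Zariski-closed and $M_n(F)$ is Zariski-dense in $M_n(\bar F)$, the hypothesis transfers to the scalar extension, so I may assume $F$ is algebraically closed. For such an $a\in f(A)$, fix an eigenvalue $\lambda$ of multiplicity $m>n/2$; conjugating $a$ (which keeps it in the conjugation-invariant set $f(A)$) so that the generalized $\lambda$-eigenspace occupies the first $m$ coordinates puts $a-\lambda I$ into precisely the block form required by Lemma~\ref{lad}, with the upper-left block a nilpotent $u\in M_m(F)$. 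Since ${\rm ad}_a={\rm ad}_{a-\lambda I}$, Lemma~\ref{lad} yields that ${\rm ad}_a^k(A)$ contains no invertible matrix for every $k\ge 2m-1$. Because $m\le n$, the uniform choice $K=2n-1$ works simultaneously for every $a\in f(A)$.

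Consider now the polynomial $h(X_1,\ldots,X_m,Y):={\rm ad}_{f(X_1,\ldots,X_m)}^{2n-1}(Y)$. By the previous step, $h(A)$ contains no invertible matrix, so the contrapositive of Corollary~\ref{pl} forces $h$ to be an identity of $A$. Combined with the converse (due to Herstein) of the Engel-type observation recorded just before the theorem, this yields, for every $a\in A^m$, a scalar $\alpha(a)\in F$ with $(f(a)-\alpha(a)I)^n=0$; equivalently, every value $f(a)$ has a single eigenvalue, necessarily $\alpha(a)=n^{-1}{\rm tr}(f(a))$.

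It remains to derive a contradiction with $f$ being neither an identity nor a central polynomial of $A$, and this is where I expect the main obstacle. I would split into two cases. If ${\rm tr}(f(\cdot))$ vanishes identically as a polynomial function of the entries, then $f(a)$ is nilpotent for all $a$, so $f(a)^n=0$, meaning $f^n$ is an identity of $A$; iterating Amitsur's theorem (Remark~\ref{re2}) then forces $f$ itself to be an identity, contradicting the hypothesis. The harder case is when ${\rm tr}(f(\cdot))\not\equiv 0$. Here I would apply the same machinery to $\widehat f$, which is not an identity of $A$ by Lemma~\ref{lt3}(b) (since $f$ is neither an identity nor a central polynomial of the simple algebra $A$), and invoke Corollary~\ref{pl} to produce $a,b$ with $[f(a),f(b)]=[N(a),N(b)]$ invertible, where $N(c):=f(c)-\alpha(c)I$ is nilpotent of index at most $n$. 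The rigidity of such an invertible commutator of nilpotents, together with the trace-polynomial identity $n\,{\rm tr}(f^2)={\rm tr}(f)^2$ entailed by the single-eigenvalue condition, is what I expect to produce the contradiction; making this step fully rigorous is the technical crux.
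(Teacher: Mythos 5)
Your argument tracks the paper's proof up to the point where you conclude, via Lemma \ref{lad}, Corollary \ref{pl} and Herstein's theorem, that every value $f(a)$ has the form $\alpha(a)I+N(a)$ with $N(a)$ nilpotent; that part is correct (and your Zariski-density reduction to $\overline{F}$, while not needed in the paper's version, is harmless). The genuine gap is exactly the step you flag as the crux: deriving a contradiction when ${\rm tr}(f(\cdot))\not\equiv 0$. Producing $a,b$ with $[f(a),f(b)]=[N(a),N(b)]$ invertible yields no contradiction by itself, because a commutator of two nilpotent matrices can perfectly well be invertible --- e.g.\ $[e_{12},e_{21}]=e_{11}-e_{22}$ in $M_2(F)$ --- and the auxiliary facts you list (nilpotency index at most $n$, the identity $n\,{\rm tr}(f^2)={\rm tr}(f)^2$) are automatic consequences of the single-eigenvalue condition and add no leverage. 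So the statement you still need, namely that a polynomial all of whose values on $M_n(F)$ have a single eigenvalue must be an identity or a central polynomial, is left unproved, and this is precisely the hard part of the theorem. (Your Case 1, where the trace vanishes identically and Amitsur's theorem applied to $f^n$ forces $f$ to be an identity, is fine.)

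For comparison, the paper closes this gap with a descent on the Engel degree rather than a case split: let $d$ be the smallest positive integer such that ${\rm ad}_f^{d}(X_{m+1})$ is an identity of $A$; the remark before the theorem shows $d$ is odd, and $d\ne 1$ because $f$ is neither an identity nor central. Herstein's result gives, for each $b\in f(A)$, a scalar $\alpha$ with $v=b-\alpha$ satisfying $v^{(d+1)/2}=0$, whence ${\rm ad}_b^{d-1}(x)$ equals a scalar multiple of $v^{(d-1)/2}xv^{(d-1)/2}$ and so is never invertible; Corollary \ref{pl} then forces ${\rm ad}_f^{d-1}(X_{m+1})$ to be an identity, contradicting the minimality of $d$. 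Without an argument of this kind (or some substitute for it), your Case 2 remains open and the proof is incomplete.
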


\begin{proof}
Suppose the theorem is false. Then, for any
$b\in f(A)$ there exists an eigenvalue  $\lambda\in \overline{F}$, the algebraic closure of $F$, whose algebraic multiplicity is $s >\frac{n}{2}$. Let $p\in M_n(\overline{F})$
be an invertible matrix such that $pbp^{-1}$ is of the form $ \left[ \begin{smallmatrix} \lambda + u & 0 \cr 0 & * \cr \end{smallmatrix} \right]$ where $u\in M_s(F)$ is a nilpotent matrix. By Lemma \ref{lad}, 
$${\rm ad}_{pbp^{-1}}^{2n-1}(M_n(\overline{F})) = {\rm ad}_{pbp^{-1}-\lambda}^{2n-1}(M_n(\overline{F}))$$ 
contains no invertible matrices. Therefore, ${\rm ad}_{b}^{2n-1}(A)$ also does not contain invertible matrices. That is to say, none of the matrices in the image of the polynomial
$${\rm ad}_{f}^{2n-1}(X_{m+1})= [f,[\dots[f,[f,X_{m+1}]]\dots]]$$
is invertible in $A$. Hence, ${\rm ad}_{f}^{2n-1}(X_{m+1})$ is an identity of $A$ by Corollary \ref{pl}. 

Let $d$ be the smallest positive integer such that
 ${\rm ad}_{f}^{d}(X_{m+1})$ is an identity of $A$. That is,  ${\rm ad}_{b}^{d}=0$ on $A$ for every $b\in f(A)$. 
We are now in a position to use the aforementioned Herstein's result from \cite{Her2}. Note, first of all, that the remark preceding the statement of the theorem tells us that $d$ is odd; moreover, 
$d\ne 1$ 
since $f$ is neither an identity nor a central polynomial of $A$. By \cite{Her2}, for any $b\in f(A)$ there exists a scalar matrix $\alpha$ such that
$v=b-\alpha$ satisfies $v^{(d+1)/2}=0$. This implies that
$${\rm ad}_b^{d-1}(x) = {\rm ad}_v^{d-1}(x) = (-1)^{(d-1)/2} {d-1 \choose (d-1)/2}v^{(d-1)/2} x v^{(d-1)/2}$$ 
for every $x\in A$. Therefore, ${\rm ad}_b^{d-1}(x)$ is not invertible. In other words, none of the matrices in the image of the polynomial
${\rm ad}_f^{d-1}(X_{m+1})$ is invertible. Hence, Corollary \ref{pl} tells us that ${\rm ad}_f^{d-1}(X_{m+1})$
 is an identity, which contradicts the choice of $d$.
\end{proof}

\begin{remark} \label{r2c}
A polynomial $f$ is said to be {\em $2$-central} for $M_n(F)$ if $f ^2$ is central, but $f$ is not. The simplest example is $[X_1,X_2]$ which is $2$-central for $M_2(F)$. It turns out that $2$-central polynomials for $M_n(F)$ exist for various even $n$ (see, e.g., \cite[Proposition 4.10]{Retc}). 
Obviously, any matrix in the image of  such a polynomial has at most two eigenvalues. On the other hand,
from 
Theorem \ref{tder} we infer  that  the image  always contains a matrix having two eigenvalues of algebraic multiplicity exactly $\frac{n}{2}$.
\end{remark}

\subsection{A lemma on sets invariant under conjugation}\label{sub32}
The next lemma reveals the main idea upon which this section is based. We will need it in the case where $T=f(A)$, %(which fulfills  the basic assumption, see \eqref{conj}), 
but maybe it has some independent interest.

\begin{lemma}\label{ls}Let $F$ be a field with {\rm char}$(F)\ne 2$, %let  $f\in F\langle \mathcal X\rangle$ be any polynomial, 
let $B$ be a unital $F$-algebra, and let $A=M_n(B)$ with $n\ge 2$. 
 % such that every element in $A=M_n(B)$ is a sum of $k$ commutators and a central element. 
 If a subset $T$ of $A$ is invariant under conjugation by invertible elements in $A$, then:
\begin{enumerate} 
\item[{\rm (a)}] Any element of the form
 $[t,u]$, where $t\in T$ and $u$ is a square-zero element in $A$, lies in $T-T=\{t-t'\,|\,t,t'\in T\}$.
% is a difference of two elements from $T$.
\item[{\rm (b)}] Any element of the form
 $[t,[x,y]]$, with $t\in T$ and $x,y\in A$,
 is  a sum of $22$ elements from $T-T$.
%difference of two elements, each of which is a sum of $22$ elements from $T$.
\item[{\rm (c)}] 
If
 $T$ contains elements 
$t_1,t_2$ such that  $[t_1,t_2]$ is invertible in $A$ and, for some $k\ge 1$,
every element in $A$ is a sum of $k$ commutators  and a central element, then every commutator in $A$ is  a %difference of two elements, each of which is a 
sum of $1936k^2 + 22k$ elements from $T-T$.

\item[{\rm (d)}]  If $B=F$ is an algebraically closed field and $T$ contains a matrix $t$ such that the
 algebraic multiplicity of any of its eigenvalues does not exceed $\frac{n}{2}$, then all square-zero elements in $A$ lie in $T-T$.
\end{enumerate}
\end{lemma}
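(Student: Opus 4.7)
For part (a), since $u^2 = 0$ and $\mathrm{char}(F) \ne 2$, the elements $1 \pm \tfrac12 u$ are invertible in $A$, with $(1 \pm \tfrac12 u)^{-1} = 1 \mp \tfrac12 u$. A direct computation, in which the $utu$ contributions cancel, yields
\[
\bigl(1 + \tfrac12 u\bigr)\,t\,\bigl(1 - \tfrac12 u\bigr) \;-\; \bigl(1 - \tfrac12 u\bigr)\,t\,\bigl(1 + \tfrac12 u\bigr) \;=\; [u, t],
\]
and both summands on the left are conjugates of $t$ by invertible elements, hence lie in $T$; this gives $[t, u] \in T - T$.

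For part (b), the plan is to reduce to (a) by decomposing the commutator $[x, y] \in A = M_n(B)$ itself as a sum of $22$ square-zero matrices; once this is done, $[t, [x, y]] = \sum_{i=1}^{22} [t, u_i]$ is a sum of $22$ elements of $T - T$ by (a) applied termwise. The decomposition splits a matrix into its off-diagonal part (the strict upper and strict lower triangles, both square-zero) plus a diagonal, and the diagonal of a commutator is handled via matrix-unit identities such as $b(e_{ii} - e_{jj}) = [b e_{ij}, e_{ji}]$, which express diagonal differences as brackets of (square-zero) matrix units that in turn unfold into sums of square-zero summands; the number $22$ is the outcome of a careful bookkeeping of these unfoldings.

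For part (c), the crucial tool is the identity
\[
y \;=\; [t_1,\, t_2 c^{-1} y] \;-\; t_2 \,[t_1,\, c^{-1} y] \qquad (y \in A),
\]
obtained from the Leibniz rule and $[t_1, t_2] = c$. As a preparatory step, for $t \in T$ and $x \in A$, decomposing $x = \sum_{i=1}^k [p_i, q_i] + z$ (the central part drops out of the bracket) and applying (b) to each $[t, [p_i, q_i]]$ shows that $[t, x]$ is a sum of $22 k$ elements of $T - T$. Then, writing $a = \sum_{i=1}^k [p_i, q_i] + z_a$ gives $[a, b] = \sum_{i=1}^k [[p_i, q_i],\, b]$; for each summand, substitute the identity for $y = [p_i, q_i]$ and expand using the Jacobi identity. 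Most pieces reduce either to a bracket $[t_1, [\cdot, \cdot]]$ (bounded by (b), contributing $22$) or to a nested bracket $[X, [t_1, b]]$ which, after inserting the $22k$-element decomposition of $[t_1, b]$ and applying the preparatory bound to each inner $[X, r]$ with $r \in T$, contributes a term quadratic in $k$. Summing contributions over the $k$ summands yields $1936 k^2 + 22 k$. The principal obstacle is that the identity introduces leftover product terms $t_2 [\cdot, \cdot]$ and $[\cdot, \cdot]\,W$: since $T$ is only closed under conjugation (not multiplication), these are not a priori in $T - T$ and must be converted back to bracket form by further Leibniz manipulations, and it is this reorganization that is responsible for the constant $1936 = 4 \cdot 22^2$ in the final bound.

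For part (d), conjugation invariance of $T$, together with $t \in T$, implies that $T$ contains the entire $\mathrm{GL}_n(F)$-similarity class of $t$, so $T - T$ contains every difference $g_1 t g_1^{-1} - g_2 t g_2^{-1}$. By conjugating both sides it suffices to realize each square-zero $u$ (placed in Jordan form) as such a difference, i.e.\ to find $g \in \mathrm{GL}_n(F)$ with $g t g^{-1} = t + u$. Since $F$ is algebraically closed and each eigenvalue of $t$ has algebraic multiplicity at most $n/2$, one can conjugate $t$ into a block-diagonal form $\mathrm{diag}(A, B)$ with $\mathrm{spec}(A) \cap \mathrm{spec}(B) = \emptyset$; combined with the rank bound $\mathrm{rank}(u) \le n/2$, this block structure affords enough flexibility to construct the required $g$. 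The main obstacle is verifying that $t + u$ has exactly the same Jordan type as $t$ (the characteristic polynomial must be preserved and the geometric multiplicities of each eigenvalue must match), and this is exactly where the multiplicity hypothesis $\le n/2$ is used.
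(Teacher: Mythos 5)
Part (a) is correct and matches the paper. The other three parts each contain a genuine gap, and in each case the gap is exactly the step you deferred. In (b), the statement that every commutator in $A=M_n(B)$ is a sum of $22$ square-zero elements is not a bookkeeping exercise: it is a cited theorem (\cite[Theorem 4.4]{ABESV}), valid over an arbitrary unital $F$-algebra $B$, and the paper's proof of (b) consists precisely of invoking it and then applying (a) termwise. Your sketch does not prove it; moreover its first step is already false for $n\ge 3$, since the strictly upper (or lower) triangular part of a matrix is nilpotent but not square-zero, and "the number $22$ is the outcome of a careful bookkeeping" is an assertion, not an argument. In (c), the obstacle you name --- the leftover product term $t_2[t_1,c^{-1}y]$, which is not a commutator with an entry in $T$ --- is the whole difficulty, and you leave it unresolved ("further Leibniz manipulations"). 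The paper avoids it entirely with one explicit identity, verifiable by direct computation: with $c=[t_1,t_2]$,
\begin{equation*}
[w,z] = \big[[t_2,wc^{-1}],t_1z\big] - \big[[t_2,wc^{-1}t_1],z\big] + \big[t_1,z[t_2,wc^{-1}]\big],
\end{equation*}
in which every term is already of the form $[[t,\cdot],\cdot]$ or $[t,\cdot]$ with $t\in T$; combined with your (correct) preparatory bound of $22k$ for $[t,x]$, $t\in T$, this gives $2(22k)^2+2(22k)^2+22k=1936k^2+22k$. Your route could in principle be repaired (apply your identity to an arbitrary $w$ rather than to each $[p_i,q_i]$, and convert $[t_2r,b]$ via $[xy,z]=[x,yz]+[y,zx]$), but as written your accounting does not even produce the stated bound: decomposing $a$ into $k$ commutators first and spending roughly $4(22k)^2$ on each of the $k$ summands yields a bound cubic in $k$, not $1936k^2+22k$.

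In (d) the gap is the same kind: the phrase "this block structure affords enough flexibility to construct the required $g$" is where the entire proof should be. The paper's argument is an explicit construction: after conjugating $t$ to block upper triangular form with eigenvalue blocks of size at most $\frac n2$ (arranged so the two largest blocks are first and last), it takes the square-zero matrix $u$ whose only nonzero entries are a diagonal $d_1,\dots$ in the upper-right $[\frac n2]\times[\frac{n+1}2]$ corner, and computes that $[t,u]$ is square-zero with the main-diagonal entries of its corner block equal to $d_i(\lambda-\mu)$ for \emph{distinct} eigenvalues $\lambda\ne\mu$ --- this is precisely where the multiplicity bound $\le\frac n2$ enters --- so that by choosing the $d_i$ one realizes every rank from $0$ to $[\frac n2]$; then (a) puts these $[t,u]$ in $T-T$, and since square-zero matrices of equal rank are similar and $T-T$ is conjugation-invariant, all square-zero matrices lie in $T-T$. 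Your reformulation also has a logical slip: you cannot put both $t$ and $u$ in canonical form simultaneously, and $t+u$ need not be similar to $t$ for an arbitrary square-zero $u$ (e.g.\ $t$ diagonal and $u=e_{12}$ inside a single eigenspace creates a Jordan block); what must be shown is only that \emph{some} square-zero matrix of each admissible rank is a difference of two conjugates of $t$, and that is exactly the construction you omitted.
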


\begin{proof}
%Let $a_i\in A$ be such that 
%$t=f(a_1,\dots,a_m)$.
%Assume first that $x^2=0$. %Then, with reference to \eqref{conj2}, we have
(a) Since $u^2 =0$, $(1-\frac{u}{2})^{-1}= 1+\frac{u}{2}$. Therefore,
\begin{align*}[t,u] =
 \Big(1-\frac{u}{2}\Big)t\Big(1-\frac{u}{2}\Big)^{-1} - \Big(1-\frac{u}{2}\Big)^{-1}t\Big(1-\frac{u}{2}\Big),
\end{align*}
which shows that $[t,u]\in T-T$.

(b) This follows from (a) and 
% The desired conclusion therefore   follows from 
 \cite[Theorem 4.4]{ABESV} which states that  every commutator in $A$ is a sum of $22$  square-zero elements.

(c)
The  proof  is based on the  identity
\begin{equation*}\label{e1}
[w,z] =  \big[[t_2,w[t_1,t_2]^{-1}],t_1z\big] - \big[[t_2,w[t_1,t_2]^{-1}t_1],z\big] + \big[t_1,z[t_2,w[t_1,t_2]^{-1}]\big],
\end{equation*}
which one can check by a direct calculation. %\footnote{Similarly we can express $[x[t_1,t_2]z,y]$ which should yield results on the number of summands in expressing commutators through $f(A)$,  similar to those in \cite{R}.}
This identity shows that the commutator  $[w,z]$  of any two elements $w$ and $z$ of $A$ can be written as %a sum 
%of commutators of the form 
$$[[t_2,x_1],x_2] + [[t_2,x_3],x_4] + [t_1,x_5]$$
for some $t_i\in T$ and $x_i\in A$. 
Using (b), along with our assumption that
every element in $A$ is a sum of $k$ commutators and a central element, we see that
every commutator $[t,x]$, with $t\in T$ and $x\in A$, is
%is a difference of two elements, each of which is 
a sum of $22k$ elements from $T-T$.
Consequently, every commutator $[w,z]$  is  a sum of $$2\cdot (22k)^2 + 2\cdot (22k)^2  +22k = 1936k^2 + 22k$$
elements from $T-T$.

(d)
 Let $\lambda_1,\dots,\lambda_r$, $r\ge 2$, be the distinct eigenvalues of $t$. 
Since $T$ is invariant under conjugation, we may assume that 
$$
t=\left[\begin{matrix} t_{\lambda_1} & 0 &\dots & 0 \cr
0 & t_{\lambda_2} &\dots & 0 \cr
\vdots & \vdots & \ddots & \vdots\cr
0 & 0 &\dots &t_{\lambda_r} \cr
\end{matrix} \right],
$$  
where
$t_{\lambda_i}$ is an upper triangular matrix having $\lambda_i$ on its diagonal. By assumption,
the size of any $ t_{\lambda_i}$ does not exceed $\frac{n}{2}$. Without loss of generality, we
 may  assume that $ t_{\lambda_1}$ and $ t_{\lambda_r}$
have larger or equal size than $ t_{\lambda_2},\dots, t_{\lambda_{r-1}}$.

% If $u\in A$ has square zero, then
%$[t,u]$ lies in $f(A)-f(A)$ by Lemma \ref{ls}\,(a). Now, choose 
Let
$$u = \left[ \begin{matrix} 0 & d \cr 0 & 0 \cr \end{matrix} \right]$$
where the  upper-left (resp. lower-right) corner matrix has size $[\frac{n}{2}]\times [\frac{n}{2}]$ (resp. $[\frac{n+1}{2}]\times [\frac{n+1}{2}]$)  and $d$ is the $[\frac{n}{2}]\times [\frac{n+1}{2}]$ matrix having arbitrary entries $d_i$ on the main diagonal and zeros elsewhere (the last column of $d$ is thus zero if $n$ is odd).  Write accordingly
$$t = \left[ \begin{matrix} t_1 & * \cr 0 & t_2 \cr \end{matrix} \right],$$
where $t_1$ is of size $[\frac{n}{2}]\times [\frac{n}{2}]$ and $t_2$ is of size 
$[\frac{n+1}{2}]\times [\frac{n+1}{2}]$. Specifically,
$$
t_1=\left[\begin{matrix} t_{\lambda_1} & \dots  & 0& 0&\cr
\vdots & \ddots & \vdots & \vdots\cr
0 & \dots & t_{\lambda_{j-1}} & 0 \cr
0 & \dots &0 &t_{\lambda_j}' \cr
\end{matrix} \right],\quad\quad
t_2=\left[\begin{matrix} t_{\lambda_j}'' & 0 &\dots & 0 \cr
0 & t_{\lambda_{j+1}} &\dots & 0 \cr
\vdots & \vdots & \ddots & \vdots\cr
0 & 0 &\dots &t_{\lambda_r} \cr
\end{matrix} \right],
$$
 where 
$$t_{\lambda_j} = \left[ \begin{matrix} t_{\lambda_j}' & * \cr 0 & t_{\lambda_j}'' \cr \end{matrix} \right]$$
($t_{\lambda_j}''$ may be zero; in fact, if $r=2$, then both $t_{\lambda_j}'$ and $t_{\lambda_j}''$ are zero). We have
$$[t,u]=  \left[ \begin{matrix} 0 & t_1d-dt_2 \cr 0 & 0 \cr \end{matrix} \right]$$
and $t_1d-dt_2$ is a matrix of size  $[\frac{n}{2}]\times [\frac{n+1}{2}]$ with zeros below the main diagonal and  main diagonal entries equal to the product of $d_i$ and
the difference of two distinct eigenvalues of $t$. Indeed, this is because $\lambda_1,\dots, \lambda_j,\dots,\lambda_r$ are distinct and  the size of
$t_{\lambda_j}$, and hence also the size of $t_{\lambda_j}'$ and $t_{\lambda_j}''$, does not exceed the size of  $ t_{\lambda_1}$ and $ t_{\lambda_r}$. 
Another property that we need is  that $d_i=0$, $i=k,\dots,[\frac{n}{2}]$, implies that the $i$th row of $t_1d-dt_2$, $i=k,\dots,[\frac{n}{2}]$, 
 is zero. 

All this shows that we may  choose
$d$ in such a way that $t_1d-dt_2$ has any rank between $0$ and $[\frac{n}{2}]$. Obviously, $[t,u]$ is a square-zero matrix
of the same rank as $t_1d-dt_2$. Note that $u^2=0$ and so $[t,u]$ lies in $T-T$ by \,(a).
 Since any square-zero matrix in $A$ has rank at most  $[\frac{n}{2}]$ and two square-zero matrices have the same rank if and only if they are similar,
the desired conclusion that $T-T$ contains all square-zero matrices follows from the fact that this set is invariant under conjugation by invertible matrices. 
\end{proof}

\begin{remark} \label{Paz}
The number $22$ in (b) may not be the smallest possible. In particular, if $B=F$, then it can be replaced
 by $4$ \cite[Theorem 1.1]{dP}. Accordingly, the number $1936k^2 + 22k$  in (c) can be, in this case, replaced by $2\cdot 4^2 + 2\cdot 4^2 + 4=68$,  provided that char$(F)$ is either $0$ or does not divide $n$. This is because every
matrix with zero trace is a commutator \cite{AM} and so 
$k=1$ under these assumptions.  
 \end{remark}

\subsection{Main theorems and their corollaries}
We will now consider the situation where $T=f(A)$. As above, we write
$$f(A)-f(A)=\{t-t'\,|\,t,t'\in f(A)\}.$$
By combining some results of Subsections \ref{sub21}, \ref{sub31}, and \ref{sub32},
we can now prove our first main result.

\begin{theorem}\label{tt2}Let $F$ be an infinite field with {\rm char}$(F)\ne 2$, let $n\ge 2$, 
  let $f=f(X_1,\dots,X_m)\in F\langle \mathcal X\rangle$ be a   polynomial  which is neither an identity nor a central polynomial of $M_n(F)$, let $k\ge 1$,
	and let $B$ be a unital $F$-algebra such that every element in $A=M_n(B)$ is a sum of $k$ commutators and a central element. 
	 Then every commutator in $A$ is %a difference of two elements, each of which is
	a sum of $1936k^2 + 22k$ elements from $f(A)-f(A)$. 
\end{theorem}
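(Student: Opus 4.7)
The plan is to apply Lemma \ref{ls}(c) with $T=f(A)$. Three hypotheses must be checked: that $f(A)$ is invariant under conjugation by invertible elements of $A$, that every element in $A$ is a sum of $k$ commutators and a central element, and that $f(A)$ contains two elements whose commutator is invertible in $A$. The first is automatic by the remark recorded in Subsection \ref{sub21}, and the second is given by hypothesis. So the only substantive work is to produce $t_1,t_2\in f(A)$ with $[t_1,t_2]$ invertible in $A=M_n(B)$.

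For this, I would pass to the subalgebra $M_n(F)\subseteq M_n(B)$ and work with the polynomial $\widehat f$. First, I claim $\widehat f$ is not an identity of $M_n(F)$: if it were, then $f(M_n(F))$ would be a commutative set, and since $M_n(F)$ is simple, Lemma \ref{lt3}(b) would force $f$ to be an identity or a central polynomial of $M_n(F)$, contradicting the hypothesis. Now apply Corollary \ref{pl} to $\widehat f$ (viewed as a noncommutative polynomial in $2m$ variables) over $M_n(F)$: since $F$ is infinite, $n\ge 2$, and $\widehat f$ is not an identity of $M_n(F)$, there exist $a_1,\dots,a_{2m}\in M_n(F)$ such that
\[
\widehat f(a_1,\dots,a_{2m})=\bigl[f(a_1,\dots,a_m),\,f(a_{m+1},\dots,a_{2m})\bigr]
\]
is invertible in $M_n(F)$. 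Via the canonical embedding $M_n(F)\hookrightarrow M_n(B)=A$, the same commutator is invertible in $A$, and setting $t_1=f(a_1,\dots,a_m)$, $t_2=f(a_{m+1},\dots,a_{2m})$ gives the desired pair in $f(A)$.

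With all three hypotheses of Lemma \ref{ls}(c) verified, that lemma delivers the conclusion: every commutator in $A$ is a sum of $1936k^2+22k$ elements of $f(A)-f(A)$. The only real obstacle is the production of an invertible commutator inside $f(A)$, and this is precisely where the two strands of Section \ref{s3} (Herstein-style Lie theory, via Lemma \ref{lt3}(b)) and Subsection \ref{sub31} (the PI-theoretic Corollary \ref{pl}) meet: the former rules out the degenerate case in which $\widehat f$ is identically zero on $M_n(F)$, and the latter then upgrades ``not an identity'' to ``attains an invertible value''. Everything else is bookkeeping inside the machinery of Lemma \ref{ls}.
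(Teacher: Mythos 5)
Your proposal is correct and follows essentially the same route as the paper: Lemma \ref{lt3}(b) to see $\widehat f$ is not an identity of $M_n(F)$, Corollary \ref{pl} to produce an invertible commutator of two elements of $f(M_n(F))$, the unital embedding $M_n(F)\hookrightarrow M_n(B)$ to transport it to $A$, and then Lemma \ref{ls}(c) with $T=f(A)$.
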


\begin{proof} By Lemma \ref{lt3}\,(b), the polynomial $\widehat{f}$ is not an identity of  $M_n(F)$.  
Corollary \ref{pl} therefore tells us that $\widehat{f}(M_n(F))$ contains an invertible matrix. That is to say, there exist $t_1,t_2\in f(M_n(F))$ such that $[t_1,t_2]$ is invertible in $M_n(F)$. Since $M_n(F)$ is a (unital) subalgebra of $A$, we may regard $t_1$ and $t_2$ as elements of $f(A)$ whose commutator $[t_1,t_2]$ is invertible in $A$. Finally, recall from the introductory comments in Section \ref{s3} that $f(A)$ is invariant under conjugation.
 The theorem therefore follows from Lemma \ref{ls}\,(c) applied to  $T=f(A)$.
\end{proof}

% Finally, use the fact that  every traceless matrix $t$ can be written as a commutator $[u,y]$ \cite{AM}.

\begin{corollary}\label{ts22}Let $F$ be field of characteristic $0$,  let $f\in F\langle \mathcal X\rangle$ be a   polynomial  which is neither an identity nor a central polynomial of $M_n(F)$, and
let $C$ be a commutative unital $F$-algebra.
 Then every commutator in $A=M_n(C)$ is
%a difference of two elements, each of which is
 a sum of $7788$ elements from $f(A)-f(A)$. 
\end{corollary}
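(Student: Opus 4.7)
The plan is to invoke Theorem~\ref{tt2} with $B = C$, and the only arithmetic to notice is that $7788 = 1936 \cdot 2^2 + 22 \cdot 2$. So I would aim to verify the hypothesis of Theorem~\ref{tt2} with $k = 2$: every element of $A = M_n(C)$ must be expressible as a sum of two commutators and a central (i.e.\ scalar) element. Note that $n \geq 2$ is forced by the assumption that $f$ is neither an identity nor a central polynomial of $M_n(F)$ (for $n=1$ the algebra $M_1(F)=F$ is commutative, so every polynomial falls into one of these two classes).

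Because $\mathrm{char}(F) = 0$, the integer $n$ is invertible in $C$, and any $a \in M_n(C)$ splits canonically as
\[
a = \Bigl(a - \tfrac{1}{n}\mathrm{tr}(a)\cdot I\Bigr) + \tfrac{1}{n}\mathrm{tr}(a)\cdot I,
\]
with the first summand traceless and the second a scalar matrix, hence central. So I would reduce the verification to the statement that every traceless matrix in $M_n(C)$ is a sum of at most two commutators in $M_n(C)$. This is a classical extension of the Albert--Muckenhoupt theorem (where one commutator suffices over a field) to matrix rings over commutative coefficient rings; once it is in hand, the desired bound $k = 2$ is established and the corollary drops out of Theorem~\ref{tt2} together with the conjugation-invariance of $f(A)$ noted in Section~\ref{s3}.

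The only real obstacle is a bookkeeping/reference issue: pinpointing the right statement that guarantees the $k = 2$ bound for arbitrary commutative unital $F$-algebras $C$ (rather than just fields, where $k = 1$ and the constant drops to $1958$ as in Remark~\ref{Paz}). Once that reference is cited, the argument is entirely routine, as all the hard work is already packaged inside Theorem~\ref{tt2}.
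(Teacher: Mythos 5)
Your proposal matches the paper's proof: it invokes Theorem~\ref{tt2} with $k=2$ via the trace decomposition $a=(a-\tfrac{1}{n}\mathrm{tr}(a)I)+\tfrac{1}{n}\mathrm{tr}(a)I$, reducing to the fact that every traceless matrix over a commutative unital ring is a sum of two commutators. The reference you were missing is exactly what the paper cites, namely \cite[Theorem 15]{M} (Mesyan), so the argument is complete once that citation is supplied.
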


\begin{proof} Since $F$ has characteristic $0$,  we can write every $x\in A$ as the sum of the traceless matrix $x - \frac{{\rm tr}(x)}{n}1$ and the matrix 
$\frac{{\rm tr}(x)}{n}1$ which lies in the center of $A$. By \cite[Theorem 15]{M}, every traceless matrix is a sum of two commutators.
Therefore, 
 the conditions of Theorem \ref{tt2} are met for $k=2$.
\end{proof}

\begin{remark}\label{r6}
For many commutative algebras $C$, including all principal integral domains \cite{S}, every traceless matrix in $M_n(C)$ is actually a commutator. Then the $k=1$ case 
of Theorem \ref{tt2} applies,  so the number $7788$ can be replaced by $1958$. If $C=F$, then we see from Remark \ref{Paz} that this number can be further lowered to 68. However, see also Corollary \ref{ts2} below.
\end{remark}

%\begin{proof} 
%The only difference with the proof of the preceding corollary is that now every traceless matrix is a commutator \cite{AM}, so the $k=1$ case of Theorem \ref{tt2} applies.
%\end{proof}

%This corollary obviously holds for the algebra of matrices  over any commutative unital algebra $C$ with the property that all traceless matrices over $C$ are commutators. For example, every principal integral domain $C$ has this property  \cite{S}.  

\begin{corollary} \label{cend} Let  $F$ be an infinite field with {\rm char}$(F)\ne 2$, let $V$ be an infinite-dimensional vector  space over $F$, and
let  $f\in F\langle\mathcal X\rangle$ be a  nonconstant polynomial. Then every element in
 $A={\rm End}_F(V)$ is 
%a difference of two elements, each of which is 
a sum of $1958$ elements from $f(A)-f(A)$. 
\end{corollary}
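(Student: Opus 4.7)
The approach is to derive Corollary \ref{cend} as an application of Theorem \ref{tt2} with $k=1$. Three ingredients need to be checked: a matrix realization $A\cong M_n(B)$ with $n\ge 2$, a choice of $n$ for which $f$ is neither an identity nor a central polynomial of $M_n(F)$, and the verification that every element of $A$ is a sum of $k=1$ commutator and a central element. Since $V$ is infinite-dimensional, for any $n\ge 1$ one has $V\cong V\oplus \dots \oplus V$ ($n$ copies), which yields
$$A={\rm End}_F(V)\cong M_n\bigl({\rm End}_F(V)\bigr)=M_n(B)\quad\text{with } B=A,$$
giving the first ingredient for any $n\ge 2$.

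For the second ingredient, since $f$ is nonconstant, the polynomial $g=[f,X_{m+1}]$ is a nonzero element of $F\langle\mathcal X\rangle$. By the standard generic-matrices argument, $g$ fails to be an identity of $M_n(F)$ for some $n$, and once this fails for one value $n_0$ it fails for every larger value via the block embedding $M_{n_0}(F)\hookrightarrow M_{n_0+1}(F)$. But $g$ being a non-identity of $M_n(F)$ is precisely the statement that $f$ is neither an identity nor a central polynomial of $M_n(F)$, so I would fix any sufficiently large $n\ge 2$.

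The crucial remaining ingredient is the classical fact that every element of ${\rm End}_F(V)$ is a single commutator when $V$ is infinite-dimensional --- the purely algebraic counterpart of the Halmos theorem cited in the introduction, and in fact stronger, since here one commutator suffices rather than two. A typical proof fixes a decomposition $V=\bigoplus_{i\ge 0} V_i$ with each $V_i$ of the same dimension as $V$, uses a shift-type operator $s$ linking consecutive summands, and constructs a compensating operator $t$ so that the prescribed $x\in A$ equals $[s,t]$. Given this, every element of $A$ is a sum of one commutator and the zero central element, so the hypothesis of Theorem \ref{tt2} is satisfied with $k=1$.

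Assembling the pieces, Theorem \ref{tt2} gives that every commutator in $A$ is a sum of $1936\cdot 1^2+22\cdot 1=1958$ elements of $f(A)-f(A)$, and the ``every element is a commutator'' fact extends this conclusion from commutators to all of $A$. The main obstacle is precisely this last-mentioned classical fact: its proof is elementary but nontrivial and requires an explicit shift construction (or a clean citation from the literature on commutators in endomorphism rings of infinite-dimensional spaces). The remaining steps are a routine assembly of the tools built up in Subsections \ref{sub21}--\ref{sub32}.
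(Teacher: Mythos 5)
Your proposal is correct and follows essentially the same route as the paper: identify ${\rm End}_F(V)\cong M_n({\rm End}_F(V))$, pick $n\ge 2$ with $f$ neither an identity nor a central polynomial of $M_n(F)$, and apply Theorem \ref{tt2} with $k=1$, using that every element of ${\rm End}_F(V)$ is a commutator. The two facts you flag as needing justification are exactly those the paper handles by citation, namely \cite[Lemma 6.38]{INCA} for the choice of $n$ and \cite[Proposition 12]{M} for the single-commutator statement.
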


\begin{proof}
Pick an $n\ge 2$ such that $f$  is neither an identity nor a central polynomial of $M_n(F)$ (see \cite[Lemma 6.38]{INCA}). As
$V$ is isomorphic to $V^n$, the direct sum of $n$ copies of $V$,  End$_F(V)$ is isomorphic to $M_n({\rm End}_F(V))$. Since every element in End$(_F(V))$  is a commutator
 \cite[Proposition 12]{M}, we may apply Theorem \ref{tt2} for $k=1$.
\end{proof}

For the  kind of problems treated here, the algebra  {\rm End}$_F(V)$ with $V$ infinite-dimensional seems to be easier to deal with 
than the matrix algebra $M_n(F)$. Indeed, all its elements are commutators and it has no polynomial identities,   so there is no need to distinguish between  
different polynomials. One may thus wonder whether $f({\rm End}_F(V))$   is actually equal to {\rm End}$_F(V)$ for any nonconstant polynomial $f$. The next example 
shows that this is not true (although $f({\rm End}_F(V))$  is indeed rather large---see \cite{CL}).

\begin{example}\label{enov}
Let $V$ be a countably infinite-dimensional vector space over a field $F$,
and let $A={\rm End}_F(V)$. Take any basis
  $\{e_1,e_2,\dots\}$  of $V$.
%Let $r\in A$ be the right shift operator, i.e., $r$ is defined by $r(e_n) = e_{n+1}$ for every $n\ge 1$. Pick any $f\in A$ and define $g\in A$ by $g(e_1)=0$ and 
%$$g(e_n) = r^{n-2}f(e_1) - r^{n-3}f(e_2) - ...-rf(e_{n-2}) - f(e_{n-1})$$
%for any $n\ge 2$. Then 
 %$f=[r,g]$. Thus, every element in $A$ is a commutator; in particular, $A=[A,A]$.
Let $\ell\in A$ be the left shift operator, i.e., $\ell$ is defined by $\ell(e_1)=0$ and $\ell(e_n) = e_{n-1}$ for every $n\ge 2$. We claim that $\ell$ is not equal to the
square  of an element in $A$. Suppose this is not true. Let $h\in A$ be such that $\ell = h^2$. Write
$$h(e_1)= \lambda_1 e_1 + \lambda_2 e_2 + \dots + \lambda_r e_r,$$
 where $\lambda_i\in F$. Since $\ell=h^2$ commutes with $h$, $\ell( h(e_1))=0$. Hence,  $\lambda_2 e_1 + \dots + \lambda_r e_{r-1}=0$, and so $\lambda_2=\dots=\lambda_r=0$. Thus,
$h(e_1)=\lambda_1 e_1$. However, since $h^2(e_1)=\ell(e_1)=0$, this is possible only when $h(e_1)=0$. We continue by examining $h(e_2)$. Let $\mu_i\in F$ be such that
$$h(e_2)= \mu_1 e_1 + \mu_2 e_2 + \dots + \mu_s e_s.$$
Since $$\ell (h(e_2))  = h(\ell (e_2)) = h(e_1)=0,$$ it follows that  $\mu_2 e_1 + \dots + \mu_s e_{s-1}=0$. Therefore, $\mu_2=\dots=\mu_s=0$ and so
$h(e_2)= \mu_1 e_1$. But then
$$e_1= \ell (e_2) = h(h(e_2)) = \mu_1 h(e_1)=0,$$
a contradiction. 

 We have thus shown that 
  $\ell$ does not lie in the image of the polynomial $f(X)=X^2$. In particular, $f({\rm End}_F(V)) \ne {\rm End}_F(V)$. We remark that this is still true if $V$ is 
	finite-dimensional. Indeed, 
	 a nilpotent matrix $a\in M_n(F)$ of maximal  nilindex cannot be written as $b^2$ for some $b\in M_n(F)$. 
To prove this, observe that
$a=b^2$ implies $b^{2n} = a^n=0$ and hence $b^n=0$, which leads to the contradiction that $a^{n-1} =b^{2n-2}=0$.
\end{example}
 
\begin{corollary}\label{cH} Let $H$ be an infinite-dimensional Hilbert space and
let  $f\in \mathbb C\langle \mathcal X\rangle$ be a  nonconstant polynomial. Then every element in
 $A=B(H)$ is 
%a difference of two elements, each of which is 
a sum of $3916$ elements from $f(A)-f(A)$. 
\end{corollary}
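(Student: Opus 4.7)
The plan is to combine Theorem \ref{tt2}, applied in its sharpest form $k=1$, with Halmos's classical result that every bounded operator on an infinite-dimensional Hilbert space is a sum of two commutators \cite{Hal}.

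First, since $f$ is nonconstant, I would select an integer $n\ge 2$ for which $f$ is neither an identity nor a central polynomial of $M_n(\mathbb C)$, as permitted by \cite[Lemma~6.38]{INCA} (the same device used in the proof of Corollary \ref{cend}). Because $H$ is infinite-dimensional, $H$ is isomorphic to the direct sum of $n$ copies of itself, and hence $B(H)\cong M_n(B(H))$. Thus $A=B(H)$ fits the hypotheses of Theorem \ref{tt2} with $F=\mathbb C$ and $B=B(H)$, provided I can verify the commutator-decomposition requirement with $k=1$.

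To that end I would argue that every $T\in A$ is of the form $[X,Y]+\lambda I$ for some $X,Y\in A$ and some $\lambda\in\mathbb C$. This step rests on the Brown--Pearcy characterization of commutators in $B(H)$: an operator $S\in B(H)$ is a single commutator if and only if $S$ is not of the form $\mu I+K$ with $\mu\ne 0$ and $K$ compact. Given $T\in B(H)$, either $T$ is itself a commutator (and we take $\lambda=0$) or $T=\mu I+K$ with $\mu\ne 0$ and $K$ compact. In the latter case the compact operator $K$ cannot be of the form $\nu I+K'$ with $\nu\ne 0$ and $K'$ compact (else $\nu I=K-K'$ would be compact, forcing $\nu=0$), so $K$ is itself a commutator and $T=K+\mu I$ has the desired form.

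Theorem \ref{tt2} then yields that every commutator in $A$ is a sum of $1936\cdot 1^2+22\cdot 1=1958$ elements of $f(A)-f(A)$. Since Halmos's theorem \cite{Hal} expresses every element of $B(H)$ as a sum of two commutators, every element of $A$ is a sum of $2\cdot 1958=3916$ elements of $f(A)-f(A)$, as claimed. The decisive obstacle is securing the commutator-plus-scalar decomposition in the previous step, as this is what unlocks the $k=1$ case of Theorem \ref{tt2}; relying only on Halmos's decomposition into two commutators would force $k=2$ and inflate the answer to $2\cdot 7788=15576$.
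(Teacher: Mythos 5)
Your proposal is correct and follows the paper's proof essentially verbatim: choose $n$ so that $f$ is neither an identity nor a central polynomial of $M_n(\mathbb C)$, use $B(H)\cong M_n(B(H))$, invoke Brown--Pearcy to see that every operator is a commutator plus a scalar (so $k=1$ in Theorem \ref{tt2}, giving $1958$ for commutators), and finish with Halmos's two-commutator decomposition to get $3916$. The only difference is that you spell out the short argument that compact operators are commutators, which the paper states more briefly.
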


\begin{proof}
As in the preceding proof, pick an
 $n\ge 2$ such that $f$  is neither an identity nor a central polynomial of $M_n(F)$. As is well-known, $A\cong M_n(A)$. Further, all operators in $A$ except those of the form $\lambda I + K$ with $\lambda\in\mathbb C\setminus{\{0\}}$ and $K$ compact are commutators \cite{BP}; in particular, compact operators are commutators, so every element in $A$ is a sum of a commutator and a central element (i.e., a scalar multiple of the identity). Theorem \ref{tt2} thus tells us that every commutator in $A$ is
%a difference of two elements, each of which is 
a sum of $1958$ elements from $f(A)-f(A)$. 
 Finally, we use the fact that every operator in $A$ is a sum of two commutators \cite{Hal}.
\end{proof}

Our second main result of this section
follows immediately from  Theorem \ref{tder} and Lemma \ref{ls}\,(d).
 %Its proof is based on  Theorem \ref{tder}.

\begin{theorem}\label{secondmain}
Let $F$ be an algebraically closed  field with {\rm char}$(F)=0$, let $A=M_n(F)$ with $n\ge 2$, and let $f\in  F\langle \mathcal X\rangle$ be a   polynomial  which is neither an identity nor a central polynomial of $A$.
Then $f(A)-f(A)$ contains all square-zero matrices in $A$. 
\end{theorem}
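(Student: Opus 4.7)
The plan is to combine the two ingredients the author explicitly flags, namely Theorem \ref{tder} and Lemma \ref{ls}\,(d), after verifying the structural hypothesis on $T=f(A)$ required by the lemma.

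First I would set $T=f(A)$ and observe that the ambient setup satisfies the standing hypotheses of Lemma \ref{ls}: the base field $F$ has characteristic $\neq 2$ (it has characteristic $0$ by assumption), the coefficient algebra is $B=F$ itself, and $n\ge 2$. The only nonobvious prerequisite is the requirement in Lemma \ref{ls} that $T$ be invariant under conjugation by invertible elements of $A$, but this was recorded at the end of Subsection \ref{sub21} as a general fact about images of polynomials: for any invertible $a\in A$ and any $a_1,\dots,a_m\in A$,
\[
af(a_1,\dots,a_m)a^{-1}=f(aa_1a^{-1},\dots,aa_ma^{-1})\in f(A),
\]
so $af(A)a^{-1}=f(A)=T$.

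Next I would produce the special element required by Lemma \ref{ls}\,(d). Since $F$ has characteristic $0$ and $f$ is neither an identity nor a central polynomial of $A=M_n(F)$, Theorem \ref{tder} yields some $t\in f(A)\subseteq T$ whose eigenvalues (in $F$, which is algebraically closed) each have algebraic multiplicity at most $\tfrac{n}{2}$. This is precisely the hypothesis on $T$ in part (d) of Lemma \ref{ls}.

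With the conjugation-invariance of $T$ and the existence of such $t$ in place, applying Lemma \ref{ls}\,(d) directly gives that every square-zero element of $A$ lies in $T-T=f(A)-f(A)$, completing the proof. There is essentially no obstacle here; the substantive work was already absorbed into Theorem \ref{tder} (which supplies a matrix in $f(A)$ with sufficiently small eigenvalue multiplicities) and Lemma \ref{ls}\,(d) (which manufactures arbitrary square-zero matrices as commutators $[t,u]$ with $u^2=0$, and then uses the $u^2=0$ trick from part (a) to realize $[t,u]$ as $t_1-t_2$ with $t_1,t_2\in T$).
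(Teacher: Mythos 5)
Your proposal is correct and follows exactly the paper's route: the paper derives Theorem \ref{secondmain} immediately from Theorem \ref{tder} combined with Lemma \ref{ls}\,(d), with the conjugation-invariance of $f(A)$ supplying the hypothesis of the lemma, just as you describe. Your write-up merely makes explicit the hypothesis-checking that the paper leaves implicit.
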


%\begin{proof}
%Apply Theorem \ref{tder} and Lemma \ref{ls}\,(d).
%\end{proof}

%\begin{proof} According to Theorem \ref{tder}, we may choose a
 %$t\in f(A)$ such that the algebraic multiplicity of any of its eigenvalues does not exceed $\frac{n}{2}$. \end{proof}

 Example \ref{ekk} shows that $f(A)$ itself does not need to contain any nonzero nilpotent matrix. 
The involvement of at least two elements from $f(A)$ is thus necessary, so Theorem \ref{secondmain} provides the best possible result of this kind. 
However, the question of  what can be said about other matrices, i.e., those whose square is not $0$, remains open.

\begin{corollary}\label{ts2}
Let $F$ be an algebraically closed  field with {\rm char}$(F)=0$, let $A=M_n(F)$ with $n\ge 2$,
  and let $f\in F\langle \mathcal X\rangle$ be a   polynomial  which is neither an identity nor a central polynomial of $A$. Then every traceless matrix in $A$ is
% a difference of two elements, each of which is 
a sum of four matrices from $f(A)-f(A)$. 
\end{corollary}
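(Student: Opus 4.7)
The plan is to combine Theorem \ref{secondmain} with two classical ingredients. First, the Albert--Muckenhoupt theorem \cite{AM} (already invoked in Remark \ref{Paz}) guarantees that every traceless matrix in $A=M_n(F)$ is a single commutator; this uses only that $\mathrm{char}(F)=0$ (so in particular does not divide $n$). Second, de Paz's theorem \cite{dP} (quoted in Remark \ref{Paz}) states that every commutator in $M_n(F)$ is a sum of four square-zero matrices.

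I would then execute the following steps. Given a traceless matrix $x\in A$, write $x=[u,v]$ for suitable $u,v\in A$ by Albert--Muckenhoupt. Applying the de Paz bound to this commutator produces a decomposition
\[
x = s_1+s_2+s_3+s_4,
\]
where each $s_i\in A$ satisfies $s_i^2=0$. By Theorem \ref{secondmain}, each square-zero matrix $s_i$ belongs to $f(A)-f(A)$, since the hypotheses of that theorem are exactly those of the present corollary. Combining the two expressions gives $x$ as a sum of four elements of $f(A)-f(A)$, which is the required conclusion.

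I do not foresee a genuine obstacle here: all the hard work has been done in Theorem \ref{secondmain} (which controls the image of $f$) and in the two cited classical results (which control how traceless matrices decompose through square-zero elements). The only point that deserves a brief remark is the numerical one: the $4$ in the statement comes directly from \cite{dP}, and the reason we can use $k=1$ rather than the generic $k=2$ of Corollary \ref{ts22} is precisely that $\mathrm{char}(F)=0$, so every traceless matrix is already a single commutator. This is what allows the count to drop from $7788$ to $4$.
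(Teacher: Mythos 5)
Your argument is correct and is essentially the paper's proof: decompose the traceless matrix into four square-zero summands via \cite[Theorem 1.1]{dP} and apply Theorem \ref{secondmain} to each summand. The preliminary step of writing $x=[u,v]$ via \cite{AM} is redundant (the result of \cite{dP} applies directly to traceless matrices, and no parameter $k$ enters this corollary, which bypasses Theorem \ref{tt2} entirely), but this does not affect the validity of the proof.
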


\begin{proof}
Apply Theorem \ref{secondmain} and \cite[Theorem 1.1]{dP} which states that every traceless matrix is a sum of four square-zero matrices. 
\end{proof}

We say that polynomials $f,g\in F\langle \mathcal X\rangle$ are {\em cyclically equivalent} if $f-g$ is a sum of commutators in 
$F\langle \mathcal X\rangle$.

\begin{corollary}\label{cs2}Let $F$ be an algebraically closed  field with {\rm char}$(F)=0$, let $A=M_n(F)$ with $n\ge 2$,  and let $f\in F\langle \mathcal X\rangle$ be a   polynomial  which is 
 not cyclically equivalent to an identity of $A$
and is
not  a central polynomial of $A$. Then every matrix in $A$ is a linear combination of nine matrices from $f(A)$.
\end{corollary}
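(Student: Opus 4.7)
The plan is to reduce to Corollary \ref{ts2} via a single trace-correction step. The first ingredient is the claim that there exists $c\in f(A)$ with $\operatorname{tr}(c)\ne 0$. To see this, suppose on the contrary that $\operatorname{tr}(f(\overline{a}))=0$ for every $\overline{a}\in A^m$. By Razmyslov's theorem on trace identities of $M_n(F)$ (valid in characteristic $0$), the T-ideal of trace identities of $M_n(F)$ is, modulo the T-ideal of ordinary polynomial identities, generated by commutators; equivalently, such an $f$ must be cyclically equivalent to an identity of $A$, contradicting the hypothesis. Thus some $c\in f(A)$ has nonzero trace.

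With such a $c$ in hand, fix an arbitrary $b\in A$ and set $\lambda=\operatorname{tr}(b)/\operatorname{tr}(c)\in F$, so that $b-\lambda c$ is traceless. Corollary \ref{ts2} then supplies $c_1,c_1',\ldots,c_4,c_4'\in f(A)$ with
$$b-\lambda c=\sum_{i=1}^{4}(c_i-c_i'),$$
and hence
$$b=\lambda c+c_1+c_2+c_3+c_4-c_1'-c_2'-c_3'-c_4'$$
exhibits $b$ as a linear combination of the nine elements $c,c_1,c_2,c_3,c_4,c_1',c_2',c_3',c_4'$ of $f(A)$, as required.

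The essentially only obstacle is invoking the Razmyslov--Procesi trace-identity theorem in verifying the claim; once the existence of a $c\in f(A)$ with $\operatorname{tr}(c)\ne 0$ is granted, the proof is a one-line trace correction together with Corollary \ref{ts2}. (Note that the hypothesis of the corollary is tailored precisely so that this deduction goes through: dropping cyclic equivalence and asking only that $f$ be not an identity would not in general produce any element of $f(A)$ with nonzero trace, as Example \ref{ekk} illustrates.)
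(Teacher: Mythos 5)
Your proposal is correct and follows essentially the same route as the paper: the paper's proof is exactly the one-line trace correction $x=\frac{\operatorname{tr}(x)}{\operatorname{tr}(a)}a+\bigl(x-\frac{\operatorname{tr}(x)}{\operatorname{tr}(a)}a\bigr)$ with $a\in f(A)$ of nonzero trace, followed by Corollary \ref{ts2}. The only difference is the justification of that nonzero-trace element: the paper simply cites \cite[Corollary 4.6]{BK1} (whose proof indeed rests on the Razmyslov--Procesi theory of trace identities in characteristic $0$), whereas you re-derive it from that theory directly; your paraphrase of Razmyslov's theorem is loose -- the precise statement you need is that $\operatorname{tr}\circ f$ vanishing on $M_n(F)$ forces $f$ to be cyclically equivalent to an identity of $M_n(F)$, which is exactly the cited result -- but the argument is sound.
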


\begin{proof}
By \cite[Corollary 4.6]{BK1}, there exists an $a\in f(A)$ whose trace is not $0$. Write $x\in A$ as $$x= \frac{{\rm tr}(x)}{{\rm tr}(a)}a + \Bigl(x - \frac{{\rm tr}(x)}{{\rm tr}(a)}a\Bigr)$$
and apply Corollary \ref{ts2} to the traceless matrix $x - \frac{{\rm tr}(x)}{{\rm tr}(a)}a$.
\end{proof}

\begin{remark}
This corollary also holds  for fields that are not algebraically closed, but we have to replace  the number $9$  by $2\cdot 68 + 1=137$ (see Remark \ref{r6}).
\end{remark}

\begin{example}\label{etr} Let char$(F)=0$.
If $f=[X_1,X_2]+ \frac{1}{n}$, then every matrix
in $f(M_n(F))$ has trace equal to $1$. Accordingly, only matrices whose trace is an integer multiple of $1$ lie in the additive span of $f(M_n(F))$. This shows that, unlike in other results of this section, the involvement of linear combinations with coefficients in the field $F$ is necessary in Corollary \ref{cs2}. Moreover, it also shows that the main
 theorems  and their corollaries must  involve differences (rather than only sums) of elements from the image of a polynomial. Indeed, a sum 
of matrices from $f(M_n(F))$ is never a traceless matrix.  

It may be more illuminating to give an example of a polynomial with zero constant term:  if $g=[X_1,X_2] + [X_1,X_2]^4$, then a sum of matrices from $g(M_2(\mathbb R))$
 is never a matrix with negative trace.
\end{example}

It is well-known (and easy to see) that a central polynomial of $M_n(F)$ is an identity of $M_k(F)$ for every
$k < n$. Hence, if a polynomial   is neither an identity nor a central polynomial of $M_2(F)$, then
the same holds for $M_n(F)$ for every $n\ge 2$.
The next result therefore follows easily from Corollary \ref{ts2}, so we state it without 
proof.

\begin{corollary}\label{ts2a}Let $F$ be an algebraically closed field of characteristic $0$  and let $f\in F\langle \mathcal X\rangle$ be a   polynomial  which is neither an identity nor a central polynomial of $M_2(F)$.
If  $A =\prod_{n=2}^\infty M_n(F)$ (i.e., $A$ is the direct product of all matrix algebras $M_n(F)$ with $n\ge 2$), then every commutator in $A$
is
 a sum of four elements from $f(A)-f(A)$.
In particular,
$[A,A]\subseteq  {\rm span}\,f(A)$.
\end{corollary}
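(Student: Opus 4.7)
The plan is to reduce the statement to Corollary \ref{ts2} applied on each factor $M_n(F)$ and then lift the componentwise decompositions back to the product algebra $A$. No new ideas beyond this are needed; the content is in combining the standing hypothesis on $f$ with the observation quoted just before the corollary.

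First, I would invoke the remark preceding the statement: since $f$ is neither an identity nor a central polynomial of $M_2(F)$, the same holds for every $M_n(F)$ with $n \ge 2$. Consequently, Corollary \ref{ts2} applies to each factor: every traceless matrix in $M_n(F)$ can be written as a sum of four elements of $f(M_n(F)) - f(M_n(F))$.

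Next, take an arbitrary commutator $z = [x,y] \in [A,A]$ and look at it componentwise. For each $n \ge 2$, the component $z_n = [x_n, y_n]$ is a commutator in $M_n(F)$, hence traceless, so by the previous step there exist $t_n^{(1)}, s_n^{(1)}, \dots, t_n^{(4)}, s_n^{(4)} \in f(M_n(F))$ with
$$z_n = \sum_{i=1}^{4} \bigl(t_n^{(i)} - s_n^{(i)}\bigr).$$
Writing $t_n^{(i)} = f(a_{n,1}^{(i)}, \dots, a_{n,m}^{(i)})$ and $s_n^{(i)} = f(b_{n,1}^{(i)}, \dots, b_{n,m}^{(i)})$, I would then assemble, for each fixed $i$ and $j$, the tuples $a_j^{(i)} = (a_{n,j}^{(i)})_{n \ge 2}$ and $b_j^{(i)} = (b_{n,j}^{(i)})_{n \ge 2}$ as elements of $A$. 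Because $f$ acts coordinatewise on the direct product, the elements $t^{(i)} := f(a_1^{(i)}, \dots, a_m^{(i)})$ and $s^{(i)} := f(b_1^{(i)}, \dots, b_m^{(i)})$ belong to $f(A)$ and satisfy $z = \sum_{i=1}^{4}(t^{(i)} - s^{(i)})$, giving the desired representation.

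The ``in particular'' clause is then immediate: since $f(A) - f(A) \subseteq \mathrm{span}\, f(A)$, any sum of four such elements lies in $\mathrm{span}\, f(A)$, hence $[A,A] \subseteq \mathrm{span}\, f(A)$. There is essentially no obstacle here; the only point to be careful about is that the product $A = \prod_{n \ge 2} M_n(F)$ has the natural coordinatewise polynomial evaluation, so the componentwise data really do assemble into a single evaluation of $f$ on $A$ — a routine verification that uses nothing beyond the universal property of the direct product.
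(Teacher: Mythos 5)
Your proposal is correct and matches the paper's intended argument: the paper states this corollary without proof precisely because it follows from the remark on central polynomials together with Corollary \ref{ts2}, applied coordinatewise in the direct product, where the uniform bound of four summands (independent of $n$) is exactly what allows the componentwise decompositions to assemble into a single evaluation of $f$ on $A$.
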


It is rather obvious that various other versions of this corollary follow from Theorem \ref{tt2} and its corollaries. We wanted to give just a sample  result indicating the applicability of the fact that the number of summands from the image of a polynomial, needed to represent any commutator,  is bounded.
Note that,
in general, there is no reason to believe that the condition that a family of algebras 
 $(A_n)_{n\ge 1}$  and  a polynomial $f$ satisfy 
$[A_n,A_n]\subseteq  {\rm span}\,f(A_n)$ for every $n$ implies that the direct product
$A=\prod_{n=1}^\infty A_n$ satisfies $[A,A]\subseteq  {\rm span}\,f(A)$. For example, if $f$ is multilinear and so the linear span of the image of $f$  coincides with its additive span, and $A_n$ is such that $[A_n,A_n]\subseteq  {\rm span}\,f(A_n)$ (resp.  $A_n= {\rm span}\,f(A_n)$)
but some commutators (resp. elements) in $A_n$ cannot be expressed as sums of less than $n$ elements from
$f(A_n)$, then $[A,A]\not\subseteq  {\rm span}\,f(A)$ (resp.  $A\ne {\rm span}\,f(A)$). For a concrete example of such algebras $A_n$ and polynomials $f$, see \cite[Example 3.11]{R}. We remark that the existence of such examples also shows that the results of Section \ref{s4} do not hold for  all algebras treated in Section \ref{s3}.

%the this is true if and only if there exists an $N\ge 1$ such that every element in $[A_n,A_n]$ is a sum of at most $N$ elements from $f(A_n)$.

\subsection{An analytic supplement} Suppose a polynomial $f$ is neither an identity nor a central polynomial of $A=M_n(\mathbb C)$. Does $f(A)$ contain a matrix all of whose eigenvalues are distinct? 
%Although this is not true in general (see Remark \ref{r2c}),  it is safe to say  that most polynomials   have this property.
If $n$ is a prime number, then the answer is affirmative by \cite[Proposition 2.7]{KRZ} (for 
$n=2$ and $n=3$, this follows also from Theorem \ref{tder}). % shows that the answer is affirmative.
 It is also affirmative if $f$ is multilinear
\cite[Theorem 1.8]{Retc}.  Although it is %We do not know whether  it is 
not affirmative in general (see Remark \ref{r2c}), it is safe to say  that most polynomials   have this property.

By $\overline{\mathbb C\big(f(A)-f(A)\big)}$ we denote the closure (with respect to the usual matrix metric) of the set of all scalar multiples of matrices from
$f(A)-f(A)$.

\begin{theorem}
Let $A=M_n(\mathbb C)$ with $n\ge 2$. If $f\in  \mathbb C\langle \mathcal X\rangle$ is such that $f(A)$ contains a matrix all of whose eigenvalues are distinct, %(e.g., if $n$ is a prime  or $f$ is multilinear),
then  $ \overline{\mathbb C\big(f(A)-f(A)\big)}$ contains all traceless matrices.
\end{theorem}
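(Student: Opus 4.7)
My plan is to combine a tangent-space computation with a classical result on similarity of traceless matrices to zero-diagonal matrices. The first step is to show that for any $t'\in f(A)$, the image $\mathrm{ad}_{t'}(A)=\{[t',x]:x\in A\}$ is contained in $\overline{\mathbb{C}(f(A)-f(A))}$. The second step invokes Fillmore's theorem to conclude that, as $t'$ ranges over the conjugates of the hypothesised element $t$, these images cover all traceless matrices.

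For the first step, I would fix $x\in A$ and consider the one-parameter family $g_s=I+sx$, which is invertible for all sufficiently small $s\in\mathbb{R}$. By conjugation invariance of $f(A)$ (noted in Section \ref{s3}), $g_s t g_s^{-1}\in f(A)$, and hence
\[
y_s:=g_s t g_s^{-1}-t\in f(A)-f(A),
\]
so $s^{-1}y_s\in\mathbb{C}(f(A)-f(A))$ for $s\ne 0$. A routine expansion of $g_s^{-1}=I-sx+O(s^2)$ gives $y_s=s[x,t]+O(s^2)$, whence $s^{-1}y_s\to[x,t]$ as $s\to 0$. Letting $x$ vary over $A$ and replacing $t$ by any conjugate $t'=gtg^{-1}\in f(A)$, this yields $\mathrm{ad}_{t'}(A)\subseteq\overline{\mathbb{C}(f(A)-f(A))}$ for every invertible $g$.

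For the second step, after a single conjugation I may assume $t=\mathrm{diag}(\lambda_1,\dots,\lambda_n)$ with the $\lambda_i$ distinct, so that $\mathrm{ad}_t(A)$ is precisely the subspace of matrices with zero diagonal and $\mathrm{ad}_{gtg^{-1}}(A)=g\,\mathrm{ad}_t(A)\,g^{-1}$. Fillmore's theorem says that every non-scalar complex matrix is similar to one whose diagonal consists of any prescribed numbers summing to its trace; specialising the prescribed diagonal to zero, every traceless complex matrix can be conjugated to have zero diagonal (the only traceless scalar matrix is $0$, which is already zero-diagonal). Thus for any traceless $y$ there is an invertible $g$ with $g^{-1}yg$ zero-diagonal, i.e., $y\in g\,\mathrm{ad}_t(A)\,g^{-1}=\mathrm{ad}_{gtg^{-1}}(A)\subseteq\overline{\mathbb{C}(f(A)-f(A))}$.

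The main technical hurdle is conceptual rather than computational: the tangent-cone step alone captures only $\mathrm{ad}_t(A)$, a subspace of codimension $n-1$ inside the traceless matrices for any single $t$ with distinct eigenvalues, so the additional classical input (Fillmore's theorem, or equivalently the fact that every traceless complex matrix is similar to one with zero diagonal) is what allows one to sweep up the remaining ``diagonal'' directions by varying $t$ over its conjugacy class.
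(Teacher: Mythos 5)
Your proposal is correct and follows essentially the same route as the paper: a derivative-of-conjugation argument (you use the family $I+sx$ where the paper uses $e^{\lambda x}$, a cosmetic difference) places $\mathrm{ad}_t(A)$ inside $\overline{\mathbb C\big(f(A)-f(A)\big)}$, and then the classical fact that every traceless complex matrix is similar to a zero-diagonal one (your appeal to Fillmore's theorem, the paper's citation of Horn--Johnson) finishes the argument via conjugation invariance. The only stylistic divergence is that you conjugate the element $t$ while the paper conjugates the set $\overline{\mathbb C\big(f(A)-f(A)\big)}$, which is the same maneuver.
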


\begin{proof}
Since $f(A)$ is invariant under conjugation by invertible matrices, 
it  contains a diagonal matrix $d$  having distinct diagonal entries. Therefore, $f(A)-f(A)$ 
contains every matrix of the form
\begin{align*}
&e^{-\lambda x} d e^{\lambda x} - d\\ =& \Big(1 -  \lambda x + \frac{\lambda^2}{2!} x^2 -\dots\Big)d \Big(1 +  \lambda x + \frac{\lambda^2}{2!} x^2 +\dots\Big)-d\\
=& \lambda [d,x] + \frac{\lambda^2}{2!} [[d,x],x] + \dots,
\end{align*}
where $\lambda\in \mathbb C$ and $x\in A$. Hence,
$$[d,x] = \lim_{\lambda\to 0} \frac{1}{\lambda}\big( e^{-\lambda x} d e^{\lambda x} - d\big) \in \overline{\mathbb C\big(f(A)-f(A)\big)}.$$
An easy matrix calculation shows that every matrix with zeros on the diagonal can be written as $[d,x]$ for some $x\in A$. Since every traceless matrix is (even unitarily) similar
to such a matrix 
\cite[Problem 3 on p.\,77]{HJ}, the desired conclusion follows from the fact that $\overline{\mathbb C\big(f(A)-f(A)\big)}$ is invariant under conjugation.
\end{proof}

We do not know whether  taking the closure of $\mathbb C\big(f(A)-f(A)\big)$ is necessary.

\subsection{Concluding remarks} 
The above results basically show that for any  of the algebras
 $A=M_n(F)$,  $A=B(H)$, etc.,
  there exists a positive integer $N$ such that every  commutator (or even every element)  in $A$
%of one of the algebras $A=M_n(F)$,  $A=B(H)$, etc.,
 can be expressed by $N$ elements from the image of any polynomial $f$ that satisfies  the necessary restrictions. The main point is that $N$ is independent of $f$ %(which may be of any degree) 
as well as of the size of the matrices. However, {\em what is  the  minimal $N$}? 
More precisely, one can ask the following question.

\begin{question}\label{q25}
 %In each of Corollaries \ref{ts22},  \ref{cend},  \ref{cH}, \ref{ts2}, and \ref{cs2},  
What is the smallest number  that can replace  the %multi-digit  
number appearing in the statement of Corollary \ref{ts22},  \ref{cend},  \ref{cH}, \ref{ts2}, and \ref{cs2},  respectively  (i.e.,  $7788$, $1958$, $3916$, $4$, and $9$, respectively)?
\end{question}

This paper was essentially devoted  to the proof of the existence of this  number, but its determination is left as an open problem. The above four-digit numbers were obtained by a  somewhat rough method,
so we conjecture that they can be substantially lowered. The method that led to the numbers $4$ and $9$ was more sophisticated, but nevertheless it is hard to believe that they are the smallest 
possible.

The L'vov-Kaplansky conjecture states that, in the settings of Corollaries \ref{ts2} and \ref{cs2}, $N=1$ for multilinear polynomials (which certainly is not true for general polynomials, see
 Examples \ref{ekk},  \ref{enov}, and \ref{etr}).
  One can of course also ask what is the minimal $N$ for images of  multilinear polynomials in algebras from Corollaries \ref{ts22},  \ref{cend},  and \ref{cH}.
%As pointed out several times (in particular, in Examples \ref{ekk},  \ref{enov}, and \ref{etr}), this
 %certainly does not hold for general polynomials.
%Still, it is plausible that the minimal
 %$N$ is a rather small number even for general polynomials---certainly much smaller than 
%the big numbers  obtained by our somewhat rough method. 
%This paper, however, was devoted only to the proof of the existence of this  number, while its determination is left as an open problem.

It seems that  Question \ref{q25} can be quite difficult 
even if we fix the polynomial. For example, a still active area of research is  Waring's problem for matrices, which asks about  the number of summands needed to express a given matrix in $M_n(C)$, where 
$C$ is a commutative ring, as a sum of $k$th powers of some matrices in $M_n(C)$ (see
\cite{KG, L} and references therein). This is, of course, an extension of the classical problem, proposed by Waring in $18$th century and solved affirmatively by Hilbert,
asking whether for each positive integer $k$ there exists a positive integer $g(k)$  such that
 every positive integer is a sum of at most $g(k)$ $k$th powers.
%(for $n=1$ and $ R =\mathbb N$, this is the classical Waring's problem from the $18$th century, which was solved by Hilbert). 
  It may be remarked that  Waring type problems  on $k$th powers have   been studied  in various  rings and algebras, not only
	in $M_n(C)$ (see, e.g., \cite{LW, V}). 

Finally, 
we mention the analogy of the proposed problem with the Waring problem on images of words in groups, which was, in particular, solved for
finite simple groups 
\cite{LST}. Specifically, it was shown that given a word  $w=w(x_1,\dots,x_d) \ne 1$
and denoting, for any group $\Gamma$, by 
$w(\Gamma)$ the image of the word map  $\Gamma_w:\Gamma^d\to \Gamma$, we have $w(\Gamma)^2 = \Gamma$  for 
every  finite non-abelian simple group $\Gamma$ of sufficiently high order.   
One can thus say that the solution of the  analogous group-theoretic problem is $N=2$.\\

%\smallskip

\noindent
{\bf Acknowledgments.} The author is thankful to Peter \v Semrl for his help in some linear algebra considerations, to Jurij Vol\v ci\v c for drawing his attention to $2$-central polynomials and the paper \cite{KRZ}, to Igor Klep for useful general comments, and to Alexei Kanel-Belov for pointing out his paper \cite{KB}  after seeing the first
arXiv version of this paper.

\end{document}